\DeclareSymbolFont{AMSb}{U}{msb}{m}{n}
\documentclass[10pt,a4paper,leqno,noamsfonts]{amsart}
\linespread{1.15}
   \makeatletter 
   \renewcommand\@biblabel[1]{#1.}
   \makeatother
\usepackage[english]{babel}
\usepackage[dvipsnames]{xcolor}
\usepackage{graphicx,pifont}
\usepackage[utopia]{mathdesign}
\usepackage[a4paper,top=3cm,bottom=3cm,left=3cm,right=3cm,marginparwidth=60pt]{geometry}
\usepackage[utf8]{inputenc}
\usepackage{braket,caption,comment,mathtools,stmaryrd}
\usepackage[usestackEOL]{stackengine}
\usepackage{multirow,booktabs,microtype,relsize}
\usepackage[colorlinks,bookmarks]{hyperref} %
   \definecolor{britishracinggreen}{rgb}{0.0, 0.26, 0.15}
   \definecolor{cobalt}{rgb}{0.0, 0.28, 0.67}
   \definecolor{cornellred}{rgb}{0.7, 0.11, 0.11}
      \hypersetup{colorlinks,%
            citecolor=britishracinggreen,%
            filecolor=black,%
            linkcolor=cobalt,%
            urlcolor=cornellred}
      \setcounter{tocdepth}{1}
      \setcounter{section}{-1}
      \numberwithin{equation}{section}

\usepackage{braket,caption,comment,mathtools,stmaryrd}
\DeclareSymbolFont{usualmathcal}{OMS}{cmsy}{m}{n}
\DeclareSymbolFontAlphabet{\mathcal}{usualmathcal}

\newcommand{\BA}{{\mathbb{A}}}
\newcommand{\BN}{{\mathbb{N}}}
\newcommand{\BC}{{\mathbb{C}}}
\newcommand{\BL}{{\mathbb{L}}}
\newcommand{\BZ}{{\mathbb{Z}}}
\newcommand{\BR}{{\mathbb{R}}}
\newcommand{\CM}{{\mathcal M}}
\newcommand{\CT}{{\mathcal T}}

\newcommand{\dd}{\mathrm{d}}
\newcommand{\crit}{\operatorname{crit}}
\newcommand{\zetass}{\zeta\textrm{-ss}}
\newcommand{\zetast}{\zeta\textrm{-st}}

\newcommand\Hom{\operatorname{Hom}}

\newcommand\Spec{\operatorname{Spec}}

\newcommand{\PT}{\mathsf{PT}}
\newcommand{\DT}{\mathsf{DT}}
\newcommand{\OO}{\mathscr O}

\DeclareMathOperator{\points}{points}
\DeclareMathOperator{\Hilb}{Hilb}

\DeclareMathOperator{\Quot}{Quot}
\DeclareMathOperator{\con}{con}

\DeclareMathOperator{\Sym}{Sym}
\DeclareMathOperator{\Coh}{Coh}
\DeclareMathOperator{\Tr}{Tr}
\DeclareMathOperator{\vir}{\mathrm{vir}}

\DeclareMathOperator{\Var}{Var}
\DeclareMathOperator{\Rep}{R}
\DeclareMathOperator{\GL}{GL}

\usepackage[all]{xy}
\usepackage{tikz}
\usepackage{tikz-cd}
\usepackage{adjustbox}
\usepackage{rotating}
\usepackage{comment}

\usetikzlibrary{matrix,shapes,intersections,arrows,decorations.pathmorphing}
\tikzset{commutative diagrams/arrow style=math font}
\tikzset{commutative diagrams/.cd,
mysymbol/.style={start anchor=center,end anchor=center,draw=none}}

\tikzset{
shift up/.style={
to path={([yshift=#1]\tikztostart.east) -- ([yshift=#1]\tikztotarget.west) \tikztonodes}
}}

\theoremstyle{definition}
\newtheorem{definition}{Definition}[section]

\newtheorem{convention}{Convention}

\newtheorem{remark}[definition]{Remark}

\newtheoremstyle{thm} 
        {3mm}
        {3mm}
        {\slshape}
        {0mm}
        {\bfseries}
        {.}
        {1mm}
        {}
\theoremstyle{thm}
\newtheorem{theorem}[definition]{Theorem}
\newtheorem{corollary}[definition]{Corollary}
\newtheorem{lemma}[definition]{Lemma}

\newtheorem{thm}{Theorem}


\title[Framed motivic DT invariants of small crepant resolutions]{Framed motivic Donaldson--Thomas invariants of small crepant resolutions}
\author[Alberto Cazzaniga and Andrea T. Ricolfi]{Alberto Cazzaniga \and Andrea T. Ricolfi}

\begin{document}

\begin{abstract}{For an arbitrary integer $r\geq 1$, we compute $r$-framed motivic DT and PT invariants of small crepant resolutions of toric Calabi--Yau $3$-folds, establishing a ``higher rank'' version of the motivic DT/PT wall-crossing formula. This generalises the work of Morrison and Nagao. Our formulae, in particular their relationship with the $r=1$ theory, fit nicely in the current development of higher rank refined DT invariants.}
\end{abstract}

\maketitle

{\hypersetup{linkcolor=black}
\tableofcontents}

\section{Introduction}
Let $Y$ be a smooth Calabi--Yau $3$-fold. Donaldson--Thomas (DT in short) theory in rank $1$ is an enumerative theory virtually enumerating curves embedded in $Y$. The moduli space being `enumerated' is the Hilbert scheme of $1$-dimensional subschemes of $Y$. On the other hand, Pandharipande--Thomas (PT in short) theory has as its main character the moduli space of (rank $1$) stable pairs on $Y$, which are pairs $(F,s)$ where $F \in \Coh Y$ is a purely $1$-dimensional sheaf and $s\colon \OO_Y \to F$ is a section with $0$-dimensional cokernel. Both enumerative theories admit motivic refinements; in general it is very hard to produce explicit formulae for the generating functions of the motivic DT and PT invariants, but when the moduli spaces in question admit a description in terms of stable representations of the Jacobi algebra of a quiver with potential $(Q,\omega)$, the problem might become more tractable.
For instance, Morrison and Nagao computed in \cite{MorrNagao} motivic DT and PT invariants of small crepant resolutions $Y_\sigma$ of the affine toric Calabi--Yau $3$-fold
\[
X = \Spec \BC[x,y,z,w]/(xy-z^{N_0}w^{N_1}) \,\subset\,\BA^4,
\]
generalising previous results on the resolved conifold \cite{RefConifold}, corresponding to the case $N_0=N_1=1$.
Such resolutions $Y_\sigma \to X$ are indexed by \emph{partitions} $\sigma$ of a polygon $\Gamma_{N_0,N_1}$ naturally attached to $X$ (more details in \S\,\ref{sec:NCCR}). Each partition $\sigma$ defines a quiver with potential $(Q_{\sigma},\omega_\sigma)$ with $N=N_0+N_1$ vertices (see Figure \ref{fig:quiver_of_partition} for an example of such a $Q_\sigma$), and for any $r\geq 1$ one can consider the $r$-\emph{framed} quiver (Definition \ref{def:r-framing}) with potential $(\widetilde{Q}_{\sigma},\omega_{\sigma})$. We denote by $\widetilde J_\sigma$ the corresponding \emph{Jacobi algebra}. A generic choice of stability parameters $\zeta_{\PT}$ and $\zeta_{\DT}$, respectively in the PT and DT regions of the space of all stability parameters of $Q_\sigma$, gives rise to generating functions 
\[
\PT_r(Y_\sigma;s,T)\,\,\,\textrm{and}\,\,\,\DT_r(Y_\sigma;s,T)
\]
of motivic invariants, where (at least in the $r=1$ case) $s$ represents the point class and $T$ is a vector of curve classes. The definition of the series $\PT_r$ and $\DT_r$ is as follows. One first sets, for a generic stability parameter $\zeta$,
\[
\mathsf Z_{\zeta}(y_0,y_1,\ldots,y_{N-1}) = \sum_{\alpha \in \BN^{(Q_\sigma)_0}} \,\left[\mathfrak M_\zeta(\widetilde J_\sigma,\alpha) \right]_{\vir}\cdot y^{\alpha}
\]
where the \emph{virtual motive} $[\,\cdot\,]_{\vir}$ of the moduli stack $\mathfrak M_\zeta(\widetilde J_\sigma,\alpha)$ of $\zeta$-stable $\widetilde{J}_{\sigma}$-modules with dimension vector $(\alpha,1)$ is introduced in Definition \ref{def:motivic_partition_functions}.
One then defines
\begin{equation}\label{PT-DT-series}
\begin{split}
\PT_r(Y_\sigma;s,T) &= \mathsf Z_{\zeta_{\PT}}(s,T_1,\ldots,T_{N-1}) \\ \DT_r(Y_\sigma;s,T) &= \mathsf Z_{\zeta_{\DT}}(s,T_1,\ldots,T_{N-1})
\end{split}
\end{equation}
where $s=y_0y_1\cdots y_{N-1}$, $T_i = y_i^{-1}$ and $T=(T_1,\ldots,T_{N-1})$.

The generating functions \eqref{PT-DT-series} are computed explicitly for $r=1$ in \cite[Cor.~0.3]{MorrNagao}. The result, recalled in \S\,\ref{subsec:Computing_invariants}, is the following: one has
\[
\PT_1(Y_\sigma;s,T) = \prod_{1\leq a\leq b\leq N-1} Z_{[a,b]}(s,T_a\cdots T_b),
\]
where, letting $\set{C_i|1\leq i\leq N-1}$ be the set of components of the exceptional curve and $c(a,b)$ the number of $(-1,-1)$-curves in $\set{C_i|a\leq i\leq b}$, one sets
\[
Z_{[a,b]}(s,T_a\cdots T_b) = 
\begin{cases}
\displaystyle\prod_{m\geq 1}\prod_{j=0}^{m-1}\left(1-\BL^{j+\frac{1}{2}-\frac{m}{2}}(-s)^mT_a\cdots T_b \right) & \textrm{if }c(a,b)\textrm{ is odd} \\ \\
\displaystyle\prod_{m\geq 1}\prod_{j=0}^{m-1}\left(1-\BL^{j+1-\frac{m}{2}}(-s)^mT_a\cdots T_b \right)^{-1} & \textrm{if }c(a,b)\textrm{ is even}.
\end{cases}
\]
As for the DT series in rank $1$, one has the DT/PT correspondence
\[
\DT_1(Y_\sigma;s,T)=\DT_1^{\points}(Y_\sigma,s)\cdot\PT_1(Y_\sigma;s,T),
\]
where $\DT_1^{\points}(Y_\sigma,s)$ is the Behrend--Bryan--Szendr\H{o}i generating function \cite{BBS}, that we recall in \eqref{eqn:Hilb_points_Ysigma}.

The goal of this paper is to compute the generating functions $\PT_r(Y_\sigma;s,T)$ and $\DT_r(Y_\sigma;s,T)$ for arbitrary $r$. The result, as we will show, is a full factorisation of the above series as $r$-fold (twisted) products of the $r=1$ generating functions. Moreover, we establish an $r$-framed version of the motivic DT/PT correspondence for $Y_\sigma$. 

Our main result, proved in \S\,\ref{subsec:Computing_invariants}, is the following.

\begin{thm}\label{mainthm}
Let $Y_\sigma$ be the crepant resolution of $X$ corresponding to a partition $\sigma$. There are factorisations
\begin{equation}
\label{eqn:main_formulae}
\begin{split}
\PT_r(Y_\sigma;s,T) &\,\,=\,\, \prod_{i=1}^r \PT_1\left(Y_\sigma;(-1)^{r+1}\BL^{\frac{-r-1}{2}+i}s,T\right),\\
\DT_r(Y_\sigma;s,T) &\,\,=\,\, \prod_{i=1}^r \DT_1\left(Y_\sigma;(-1)^{r+1}\BL^{\frac{-r-1}{2}+i}s,T\right).
\end{split}
\end{equation}
Furthermore, the $r$-framed motivic DT/PT correspondence holds: there is an identity
\[
\DT_r(Y_\sigma;s,T)\,\,=\,\,\DT_r^{\points}(Y_\sigma,s)\cdot\PT_r(Y_\sigma;s,T),
\]
where $\DT_r^{\points}(Y_\sigma,s)$ is the virtual motivic partition function of the Quot scheme of points on $Y_\sigma$.
\end{thm}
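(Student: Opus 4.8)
The plan is to pass through the NCCR of $X$ recalled in \S\ref{sec:NCCR} and to reduce both families of invariants to the combinatorics of the $r$-framed quiver with potential $(Q_\sigma,W_\sigma)$; the factorisation \eqref{eqn:main_formulae} will then come from a filtration argument, and the $r$-framed DT/PT correspondence will be a formal consequence. First I would recall that, after this reduction, $\DT_r(Y_\sigma;s,T)$ and $\PT_r(Y_\sigma;s,T)$ are generating functions of motivic vanishing-cycle classes $[\phi_{W_\sigma}]$ on the moduli stacks of $r$-framed representations of $Q_\sigma$, where $s$ tracks the framing component of the dimension vector and the components of $T$ track the remaining ones, and where the two series correspond to the two chambers of stability parameters singled out by Morrison and Nagao --- the cyclic chamber for $\DT_r$, in which the $r$ framing vectors generate the representation, and the PT chamber for $\PT_r$. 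In both cases I would fix a distinguished framing vertex and regard an $r$-framed representation as a pair $(M,(v_1,\dots,v_r))$ with each $v_j$ lying in the corresponding vector space of $M$.

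To obtain the factorisation I would introduce the \emph{framing filtration}: given an $r$-framed stable representation $(M,(v_1,\dots,v_r))$, let $M_k\subseteq M$ be the subrepresentation generated by $v_1,\dots,v_k$ --- together, in the PT chamber, with whatever ``pure part'' the stability forces one to close up by. This produces a filtration $0=M_0\subseteq M_1\subseteq\cdots\subseteq M_r=M$ with $1$-framed stable subquotients $M_k/M_{k-1}$, and the assignment $(M,(v_\bullet))\mapsto(M_k/M_{k-1})_{k=1}^{r}$ stratifies the $r$-framed moduli stack into pieces fibred over products of $1$-framed moduli stacks, the fibres being stacky affine spaces whose motivic class is dictated by the Euler pairings among the subquotients. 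Since $[\phi_{W_\sigma}]$ is multiplicative along such filtrations by the Thom--Sebastiani theorem for motivic vanishing cycles, summing over strata expresses $\DT_r$ (resp. $\PT_r$) as an $r$-fold twisted product, in the motivic quantum torus of $Q_\sigma$, of copies of $\DT_1$ (resp. $\PT_1$). Unwinding the symmetrised Euler form --- the twist $\BL^{-\frac{1}{2}\langle\,\cdot\,,\,\cdot\,\rangle}$ between the $i$-th and $j$-th framing factor --- converts this twisted product into the genuine product on the right-hand side of \eqref{eqn:main_formulae}, the $i$-th factor being evaluated at $(-1)^{r+1}\BL^{\frac{-r-1}{2}+i}s$; the sign $(-1)^{r+1}$ and the balanced range $\{\frac{1-r}{2},\dots,\frac{r-1}{2}\}$ of $\BL$-exponents are precisely what a symmetrised twist produces.

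The $r$-framed DT/PT correspondence then follows formally. Morrison and Nagao proved the $r=1$ identity $\DT_1=\DT_1^{\points}\cdot\PT_1$, and the same filtration argument applied to the $0$-dimensional sector --- equivalently, the known motivic higher-rank Quot-scheme-of-points formula on the smooth Calabi--Yau $3$-fold $Y_\sigma$, which is itself of this twisted-product shape by the local $\BA^3$ computation together with the multiplicativity of power structures --- gives $\DT_r^{\points}(Y_\sigma,s)=\prod_{i=1}^{r}\DT_1^{\points}\!\big(Y_\sigma,(-1)^{r+1}\BL^{\frac{-r-1}{2}+i}s\big)$. Multiplying the two factorisations factor by factor yields $\DT_r=\DT_r^{\points}\cdot\PT_r$.

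The main obstacle is the rigour of the stratification step and its compatibility with the two stability chambers. In the cyclic (DT) chamber the framing filtration visibly has $1$-framed stable subquotients, but in the PT chamber one must check that closing up by the pure part still yields a filtration with $1$-framed PT-stable subquotients and that the resulting strata are genuine Zariski-locally-trivial affine fibrations over the product moduli --- only then do the motivic class and the vanishing cycle split as products and does Thom--Sebastiani apply cleanly. A safer but more laborious route is to avoid the Hall-algebra formalism and instead compute both sides from the explicit coloured pyramid-partition / torus-fixed-point description of the $r$-framed moduli of $Y_\sigma$, reading the factorisation off the combinatorics; even there, matching the $\BL^{1/2}$-normalisations on the two sides is the delicate point.
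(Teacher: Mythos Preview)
Your strategy is genuinely different from the paper's, and it has a real gap in the PT chamber.

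The paper does not use a framing filtration splitting the $r$ framings one at a time. It works entirely in the motivic quantum torus $\mathcal T_{\widetilde Q}$: combining the $3$-step filtration from \cite[Prop.~3.5]{RefConifold} (which treats the framing vertex as a single block) with the commutation rule $y_\infty \cdot y^{(\alpha,0)} = (-\BL^{1/2})^{-r\alpha_0} y^{\widetilde\alpha}$ and the Morrison--Nagao factorisation $A_U^\sigma = A_\zeta^+ A_\zeta^-$, one obtains the closed formula
\[
\mathsf Z_\zeta(y_0,\ldots,y_{N-1}) \,=\, \frac{A_\zeta^-\bigl((-\BL^{\frac{1}{2}})^r y_0,y_1,\ldots,y_{N-1}\bigr)}{A_\zeta^-\bigl((-\BL^{-\frac{1}{2}})^r y_0,y_1,\ldots,y_{N-1}\bigr)},
\]
valid uniformly for \emph{every} generic $\zeta$. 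All of the $r$-dependence sits in this ratio, and the factorisation \eqref{eqn:main_formulae} is then pure algebra with the explicit infinite products $A_\alpha$: one checks by hand that each local ratio $Z_\alpha^{(r)}$ splits as $\prod_{i=1}^r Z_\alpha^{(1)}$ with the stated shifts. No separate argument for the PT chamber is required, and the DT/PT correspondence is read off from the fact that the DT range of roots is the PT range disjoint union the imaginary roots.

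Your framing filtration, by contrast, is tailored to the cyclic chamber: there $M_r = M$ and the subquotients $M_k/M_{k-1}$ are visibly $1$-framed cyclic. In the PT chamber $M$ is \emph{not} generated by the framing vectors, and ``closing up by the pure part'' is not a definition --- there is no canonical pure submodule adapted to a partial set of framings, no reason the resulting subquotients are $1$-framed PT-stable, and no argument that the strata are affine fibrations over products of $1$-framed moduli. You flag this as the main obstacle, but it is not a technicality to be cleaned up: without it you have no PT factorisation, and then your deduction of the $r$-framed DT/PT identity collapses, since it takes both the DT and the PT factorisation as inputs.

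One small correction to your setup: $s$ does not track the framing component. The framing vertex has dimension $1$ throughout and contributes no variable; rather $s = y_0 y_1 \cdots y_{N-1}$ tracks $\alpha_0$, the dimension at the distinguished \emph{unframed} vertex $0$, which is exactly why the commutation of $y_\infty$ past $y^\alpha$ produces the shift in $s$.
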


The series $\DT_r^{\points}(\BA^3,s) = \sum_{n \geq 0} [\Quot_{\BA^3}(\OO^{\oplus r},n)]_{\vir}\cdot s^n$, originating from the critical locus structure on $\Quot_{\BA^3}(\OO^{\oplus r},n)$, is studied in detail in \cite{Cazzaniga_Thesis,ThesisR,refinedDT_asymptotics}. The series $\DT_r^{\points}(Y,s)$ was introduced and computed for all $3$-folds $Y$ in \cite[\S\,4]{Quot19}, generalising the $r=1$ case corresponding to $\Hilb^nY$ \cite{BBS}. See \S\,\ref{sec:motive_quot} for more details --- for instance, an explicit formula for $\DT_r^{\points}(Y_\sigma,s)$ will be given in Equation \eqref{DT_points_Y_sigma}. 

A first instance of Formulae \eqref{eqn:main_formulae} was computed in \cite[Chap. 3]{Cazzaniga_Thesis} for the case of the resolved conifold and the resolution of a line of ${A}_{2}$ singularities.

The same factorisation of generating functions of ``rank $r$ objects'' into $r$ copies of generating functions of rank $1$ objects, shifted precisely as in Formulae \eqref{eqn:main_formulae}, has recently been observed in the context of higher rank K-theoretic DT invariants \cite{FMR_K-DT} and in string theory \cite{Magnificent_colors}. 

Even though the geometric meaning of the moduli spaces of quiver representations giving rise to the $r$-framed invariants \eqref{eqn:main_formulae}, for arbitrary $r$, is not as clear as in the  $r=1$ case, we do believe that such moduli spaces have a sensible geometric interpretation as suitable ``higher rank'' analogues of the Hilbert scheme of curves in $Y_\sigma$ (DT side) and the moduli space of stable pairs on $Y_\sigma$ (PT side). We come back to this in Remark \ref{remark:geometric_interpretation}, where we discuss a geometric interpretation of the framed moduli spaces in the PT chamber for the case of the conifold and $\widetilde{A}_{2}$ quivers.

\section{Background material}
\label{sec:background_material}

\subsection{Rings of motives}
\label{subsec:motivic_quantum_torus}

In this subsection we recall the definitions of various rings where the motivic invariants we want to study live.

As in \cite{RefConifold,MorrNagao}, we let $\CM_{\BC}$ be the Grothendieck ring of the category of effective Chow motives over $\BC$ with rational coefficients \cite{Manin_motivic}, extended with $\BL^{-1/2}$. A lambda-ring structure on $\CM_{\BC}$ is obtained by setting $\sigma_n ([X]) = [\Sym^nX]$ and $\sigma_n (\BL^{1/2})=\BL^{n/2}$ to define the lambda operations. In particular, there is a well defined notion of power structure and plethystic exponential on $\CM_{\BC}$ (see e.g.~\cite[\S\,2.5]{BBS} or \cite[\S\,1.5.1]{DavisonR} for their formal properties). We consider the dimensional completion \cite{Behrend_Ajneet}
\[
\widetilde{\CM}_{\BC} = \CM_{\BC} \llbracket \BL \rrbracket,
\]
which is also a lambda-ring, and in which the motives $[\GL_k]$ of all general linear groups are invertible.

\subsubsection{The virtual motive of a critical locus}

Let $U$ be a smooth $d$-dimensional $\BC$-scheme, $f\colon U \to \BA^1$ a regular function. The \emph{virtual motive} of the critical locus $\crit(f) = Z( \dd f) \subset U$, depending on the pair $(U,f)$, is defined in \cite{RefConifold,MorrNagao} as the motivic class
\[
\bigl[\crit(f)\bigr]_{\vir} = -(-\BL^{\frac{1}{2}})^{-d}\cdot \left[\phi_f\right] \,\in \,\CM_{\BC}^{\hat\mu},
\]
where $[\phi_f] \in K_0^{\hat\mu}(\Var_{\BC})$ is the (absolute) motivic vanishing cycle class defined by Denef and Loeser \cite{DenefLoeser1} and the `$\hat\mu$' decoration refers to $\hat\mu$-equivariant motives, where $\hat\mu$ is the group of all roots of unity. However, all the motivic invariants studied here will live in the subring $\CM_{\BC}\subset \CM_{\BC}^{\hat\mu}$ of classes carrying the trivial $\hat\mu$-action, so we will not be concerned with the subtle structure of this larger ring. 

As an example, consider the function $f = 0 \in \Gamma(U)$. Then $\crit(f) = U$ and $[\phi_f] = -[U]$, so $[U]_{\vir} = (-\BL^{\frac{1}{2}})^{-\dim U}\cdot [U]$. For instance,

\begin{equation}\label{example:vir_motive_smooth_scheme}
[\GL_k]_{\vir} = (-\BL^{\frac{1}{2}})^{-k^2}\cdot [\GL_k].
\end{equation}

\begin{remark}\label{remark:different_conventions}
Our definition of $[\crit(f)]_{\vir}$ differs from the original one \cite[\S\,2.8]{BBS}, which is also the one used in \cite{DavisonR,refinedDT_asymptotics}. We decided to adopt the conventions in \cite{RefConifold,MorrNagao} to keep close to the original formulae. In practice, the difference amounts to the substitution $\BL^{1/2} \leftrightarrow -\BL^{1/2}$. In particular, the Euler number specialisation with our conventions is $\BL^{1/2}\to 1$, instead of $\BL^{1/2}\to -1$.
\end{remark}

\subsection{Quivers: framings, and motivic quantum torus}
A quiver $Q$ is a finite directed graph, determined by its sets $Q_0$ and $Q_1$ of vertices and edges, respectively, along with the maps $h$, $t\colon Q_1 \to Q_0$ specifying where an edge starts or ends.
We use the notation
\[
\begin{tikzcd}
t(a) \,\,\bullet  \arrow{rr}{a} & & \bullet\,\, h(a)
\end{tikzcd}
\]
to denote the \emph{tail} and the \emph{head} of an edge $a \in Q_1$.

All quivers in this paper will be assumed connected.
The \emph{path algebra} $\BC Q$ of a quiver $Q$ is defined, as a $\BC$-vector space, by using as a $\BC$-basis the set of all paths in the quiver, including a trivial path $e_i$ for each $i \in Q_0$. The product is defined by concatenation of paths whenever the operation is possible, and $0$ otherwise. The identity element is $\sum_{i \in Q_0}e_i \in \BC Q$.

On a quiver $Q$ one can define the \emph{Euler--Ringel form} $\chi(-,-)\colon \BZ^{Q_0}\times \BZ^{Q_0} \to \BZ$ by
\[
\chi(\alpha,\beta) = \sum_{i \in Q_0}\alpha_i\beta_i - \sum_{a \in Q_1}\alpha_{t(a)}\beta_{h(a)},
\]
as well as the skew-symmetric form
\[
\braket{\alpha,\beta} = \chi(\alpha,\beta)-\chi(\beta,\alpha).
\]

The following construction will be central in our paper.

\begin{definition}[$r$-framing]\label{def:r-framing}
Let $Q$ be a quiver with a distinguished vertex $0\in Q_0$, and let $r$ be a positive integer. We define the quiver $\widetilde Q$ by adding one vertex, labelled $\infty$, to the original vertices in $Q_0$, and $r$ edges $\infty\to 0$. We refer to $\widetilde Q$ as the $r$-\emph{framed} quiver obtained out of $(Q,0)$.
\end{definition}

The $r$-framing construction was applied to the $3$-loop quiver (on the left in Figure \ref{fig:3loop_and_conifold_quiver}) in \cite{Cazzaniga_Thesis,ThesisR,BR18,refinedDT_asymptotics}, following the $r=1$ case studied by Behrend--Bryan--Szendr\H{o}i \cite{BBS}, and to the conifold quiver (on the right in Figure \ref{fig:3loop_and_conifold_quiver}) in \cite{Cazzaniga_Thesis}. In this paper, it will be applied more generally to the quivers arising in the work of Morrison--Nagao \cite{MorrNagao}, which we briefly discuss in \S\,\ref{sec:NCCR}. The case $r=1$ was covered in \cite{RefConifold,MorrNagao}.

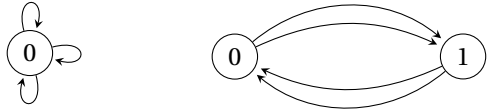
\begin{figure}[ht]
\centering
\begin{tikzpicture}[>=stealth,->,shorten >=2pt,looseness=.5,auto]
  \matrix [matrix of math nodes,
           column sep={3cm,between origins},
           row sep={3cm,between origins},
           nodes={circle, draw, minimum size=3.5mm}]
{ 
|(B)| 0 \\         
};
\tikzstyle{every node}=[font=\small\itshape]
\path[->] (B) edge [loop above] node {} ()
              edge [loop right] node {} ()
              edge [loop below] node {} ();
\end{tikzpicture}
\qquad \qquad
\begin{tikzpicture}[>=stealth,->,shorten >=2pt,looseness=.5,auto]
  \matrix [matrix of math nodes,
           column sep={3cm,between origins},
           row sep={3cm,between origins},
           nodes={circle, draw, minimum size=3.5mm}]
{ 
|(A)| 0 & |(B)| 1 \\         
};
\tikzstyle{every node}=[font=\small\itshape]
\draw (A) to [bend left=25,looseness=1] (B) node {};
\draw (A) to [bend left=40,looseness=1] (B) node {};
\draw (B) to [bend left=25,looseness=1] (A) node {};
\draw (B) to [bend left=40,looseness=1] (A) node {};
\end{tikzpicture}
\caption{The $3$-loop quiver $L_3$ and the conifold quiver $Q_{\con}$.}\label{fig:3loop_and_conifold_quiver}
\end{figure}

Let $Q$ be a quiver. Define its \emph{motivic quantum torus} (or \emph{twisted motivic algebra}) as
\[
\mathcal T_Q = \prod_{\alpha \in \BN^{Q_0}} \widetilde{\mathcal{M}}_{\BC}\cdot y^\alpha
\]
with product rule
\begin{equation}\label{eqn:product_in_TQ}
    y^\alpha\cdot y^\beta = (-\BL^{\frac{1}{2}})^{\braket{\alpha,\beta}}y^{\alpha+\beta}.
\end{equation}
If $\widetilde{Q}$ is the $r$-framed quiver associated to $(Q,0)$ via Definition \ref{def:r-framing}, one has that $\mathcal T_Q$ sits inside $\mathcal T_{\widetilde{Q}}$ as a $\widetilde{\CM}_{\BC}$-subalgebra, and there is a $\BZ$-module decomposition 
\[
\mathcal T_{\widetilde{Q}} = \mathcal T_Q \oplus \prod_{d\geq 0} \widetilde{\CM}_{\BC}\cdot y_\infty^d,
\]
where we have set $y_\infty = y^{(\mathbf 0,1)}$. Similarly, a generator $y^\alpha \in \mathcal T_Q$ will be identified with its image $y^{(\alpha,0)} \in \mathcal T_{\widetilde{Q}}$.

\subsection{Quiver representations and their stability}

Let $Q$ be a quiver.
A \emph{representation} $\rho$ of $Q$ is the datum of a finite dimensional $\BC$-vector space $\rho_i$ for every vertex $i\in Q_0$, and a linear map $\rho(a)\colon \rho_i\to \rho_j$ for every edge $a\colon i\to j$ in $Q_1$.
The \emph{dimension vector} of $\rho$ is the vector $\underline{\dim}\,\rho = (\dim_{\BC}\rho_i)_i\in \mathbb N^{Q_0}$, where $\BN = \BZ_{\geq 0}$.

\begin{convention}\label{convention:dimvec}
Let $Q$ be a quiver, $\widetilde Q$ the associated $r$-framed quiver. The dimension vector of a representation $\widetilde \rho$ of $\widetilde Q$ will be denoted $(\alpha,d)$, where $\alpha \in \BN^{Q_0}$ and $\dim_{\BC}\widetilde{\rho}_\infty = d \in \BN$.
\end{convention}

Representations of a quiver $Q$ form an abelian category, which is equivalent to the category of left modules over the path algebra $\BC Q$ of the quiver.
The space of all representations of $Q$, with a fixed dimension vector $\alpha\in \mathbb N^{Q_0}$, is the affine space
\[
\Rep(Q,\alpha) = \prod_{a \in Q_1}\Hom_{\BC}(\BC^{\alpha_{t(a)}},\BC^{\alpha_{h(a)}}).
\]
The gauge group $\GL_\alpha = \prod_{i\in Q_0} \GL_{\alpha_i}$ acts on $\Rep(Q,\alpha)$ by $(g_i)_i \cdot (\rho(a))_{a\in Q_1} = (g_{h(a)}\circ\rho(a)\circ g_{t(a)}^{-1})_{a \in Q_1}$. The quotient stack 
\[
\mathfrak M(Q,\alpha) = \bigl[\Rep(Q,\alpha)/\GL_\alpha\bigr]
\]
parametrises isomorphism classes of representations of $Q$ with dimension vector $\alpha$.

Following \cite{RefConifold,MorrNagao}, we recall the notion of (semi)stability of a representation.

\begin{definition}\label{centralcharge}
A \emph{central charge} is a group homomorphism $\mathrm{Z}\colon \mathbb Z^{Q_0}\to \BC$ such that the image of $\mathbb N^{Q_0}\setminus 0$ lies inside $\mathbb H_+ = \set{t e^{\sqrt{-1}\pi\varphi}|t>0,\,0<\varphi\leq 1}$. For every $\alpha\in \mathbb N^{Q_0}\setminus 0$, we denote by $\varphi(\alpha)$ the real number $\varphi$ such that $\mathrm{Z}(\alpha) = t e^{\sqrt{-1}\pi\varphi}$. It is called the \emph{phase} of $\alpha$ with respect to $\mathrm{Z}$.
\end{definition}

Note that every vector $\zeta\in \BR^{Q_0}$ induces a central charge $\mathrm{Z}_{\zeta}$ if we set
$\mathrm{Z}_{\zeta}(\alpha) = -\zeta\cdot \alpha + \lvert\alpha\rvert \sqrt{-1}$,
where $\lvert\alpha\rvert = \sum_{i\in Q_0}\alpha_i$. We denote by $\varphi_\zeta$ the induced phase function, and we set $
\varphi_\zeta(\rho) = \varphi_\zeta(\underline{\dim}\,\rho)$ for every representation $\rho$ of $Q$. The \emph{slope function} attached to $\mathrm{Z}_\zeta$ assigns to $\alpha \in \BN^{Q_0}\setminus 0$ the real number $\mu_\zeta(\alpha) = \zeta\cdot \alpha / \lvert \alpha\rvert$. Note that $\varphi_\zeta(\alpha) < \varphi(\beta)$ if and only if $\mu_\zeta(\alpha) < \mu_\zeta(\beta)$ (cf.~\cite[Rem.~3.5]{MorrNagao}).

\begin{definition}\label{stablereps}
Fix $\zeta\in \BR^{Q_0}$. A representation $\rho$ of $Q$ is called \emph{$\zeta$-semistable} if 
\[
\varphi_\zeta(\rho')\leq \varphi_\zeta(\rho)
\]
for every nonzero proper subrepresentation $0\neq \rho'\subsetneq \rho$. When strict inequality holds, we say that $\rho$ is \emph{$\zeta$-stable}. Vectors $\zeta\in \BR^{Q_0}$ are referred to as \emph{stability parameters}. 
\end{definition}

For a fixed $\zeta$, every representation $\rho$ admits a unique filtration
\[
\mathrm{HN}_\zeta(\rho)\,\,\,\colon \qquad 0 = \rho_0 \subset \rho_1 \subset \cdots \subset \rho_s = \rho,
\]
called the \emph{Harder--Narasimhan filtration}, such that $\rho_i/\rho_{i-1}$ is $\zeta$-semistable for $1\leq i\leq s$, and there are strict inequalities $\varphi_\zeta(\rho_1/\rho_0)>\varphi_\zeta(\rho_2/\rho_1)>\cdots > \varphi_\zeta(\rho/\rho_{s-1})$. 

\begin{remark}
The existence, uniqueness and functoriality of the Harder--Narasimhan filtration yields a stratification of the moduli stack of all $Q$-representations into locally closed substacks, indexed by Harder--Narasimhan type (this is a direct consequence of \cite[Prop.~3.4]{Reineke_inventiones}); this stratification induces  relations in the motivic quantum torus, which are implicitly used in Lemma \ref{dec2}.
\end{remark}

\begin{definition}[{\cite[\S\,1.3]{RefConifold}}]
Let $\alpha \in \BN^{Q_0}$ be a dimension vector. A stability parameter $\zeta$ is called $\alpha$-\emph{generic} if for any $0<\beta<\alpha$ one has $\varphi_\zeta(\beta) \neq \varphi_\zeta(\alpha)$.
\end{definition}

The sets of $\zeta$-stable and $\zeta$-semistable representations with given dimension vector $\alpha$ form a chain of open subsets 
\[
\Rep^{\zetast}(Q,\alpha)\subset \Rep^{\zetass}(Q,\alpha)\subset \Rep(Q,\alpha).
\]
If $\zeta$ is $\alpha$-generic, one has $\Rep^{\zetast}(Q,\alpha) = \Rep^{\zetass}(Q,\alpha)$.

\subsection{Quivers with potential}
\label{subsec:quiver_with_potential}
Let $Q$ be a quiver. Consider the quotient $\BC Q / [\BC Q,\BC Q]$ of the path algebra by the vector space spanned by commutators. An element $W \in \BC Q / [\BC Q,\BC Q]$, which can be represented by a finite linear combination,
is called a \emph{potential}. Given a cyclic path $w$ and an arrow $a \in Q_1$, one defines the noncommutative derivative
\[
\frac{\partial w}{\partial a} = \sum_{\substack{w=cac' \\ c,c'\textrm{ paths in }Q}}c'c\,\in\,\BC Q.
\]
This rule extends to an operator $\partial/\partial a \colon \BC Q / [\BC Q,\BC Q] \to \BC Q$ acting on every potential.
Thus every potential  $W$ gives rise to a (two-sided) ideal $I_W \subset \BC Q$ generated by the paths $\partial W/\partial a$ for all $a \in Q_1$.
The quotient $J = J(Q,W) = \BC Q / I_{W}$
is called the \emph{Jacobi algebra} of the quiver with potential $(Q,W)$. 
For every $\alpha \in \BN^{Q_0}$, a potential $W = \sum_ca_cc$ determines a regular function
\[
f_\alpha \colon \Rep(Q,\alpha) \to \BA^1,\quad \rho \mapsto \sum_{c\textrm{ cycle in }Q}a_c \cdot \Tr (\rho(c)).
\]
The points in the critical locus $\crit (f_\alpha) \subset \Rep(Q,\alpha)$ correspond to $J$-\emph{modules} with dimension vector $\alpha$. Fix an $\alpha$-generic stability parameter $\zeta \in \BR^{Q_0}$. If $f_{\zeta,\alpha} \colon \Rep^{\zetast}(Q,\alpha)\to \BA^1$ is the restriction of $f_\alpha$, then
\[
\mathfrak M(J,\alpha) = \bigl[\crit (f_\alpha)/G_\alpha\bigr],\quad \mathfrak M_\zeta(J,\alpha) = \bigl[\crit (f_{\zeta,\alpha}) / \GL_\alpha\bigr]
\]
are, by definition, the stacks of $\alpha$-dimensional $J$-modules and $\zeta$-stable $J$-modules.  
\begin{definition}
A quiver with potential $(Q,W)$ admits a \emph{cut} if there is a subset $I \subset Q_1$ such that every cyclic monomial appearing in $W$ contains exactly one edge in $I$.
\end{definition}

From now on we assume $(Q,W)$ admits a cut. This condition ensures that the motive $[\mathfrak M(J,\alpha)]_{\vir}$ introduced in the next definition is monodromy-free, i.e.~it lives in $\widetilde{\mathcal M}_{\BC}$. See \cite[\S\,1.4]{RefConifold} for more details. All quivers considered in this paper admit a cut \cite[\S\,4]{MorrNagao}.

\begin{definition}[\cite{RefConifold}]
We define motivic Donaldson--Thomas invariants
\begin{equation}
    \label{eqn:motivicDT}
\begin{split}
    \bigl[\mathfrak M(J,\alpha)\bigr]_{\vir} 
    \,&=\, \frac{\left[\crit (f_\alpha)\right]_{\vir}}{\left[\GL_\alpha\right]_{\vir}} \\ 
    \left[\mathfrak M_\zeta(J,\alpha)\right]_{\vir} 
    \,&=\, (-\BL^{\frac{1}{2}})^{\chi(\alpha,\alpha)}\frac{\bigl[f_{\zeta,\alpha}^{-1}(0)\bigr] - \bigl[f_{\zeta,\alpha}^{-1}(1)\bigr]}{\left[\GL_\alpha\right]},
\end{split}
\end{equation}
in $\widetilde{\mathcal M}_{\BC}$, where $[\GL_\alpha]_{\vir}$ is as in Equation \eqref{example:vir_motive_smooth_scheme}.
The generating function
\begin{equation}\label{definition_AU}
A_U = \sum_{\alpha \in \BN^{Q_0}}\,\bigl[\mathfrak M(J,\alpha)\bigr]_{\vir}\cdot y^\alpha \,\in\,\CT_{Q}
\end{equation}
is called the \emph{universal series} attached to $(Q,W)$.
\end{definition}

\begin{definition}[{\cite[\S\,2.4]{RefConifold}}]\label{def:generic}
A stability parameter $\zeta \in \BR^{Q_0}$ is called \emph{generic} if $\zeta\cdot \underline{\dim}\,\rho \neq 0$ for every nontrivial $\zeta$-stable $J$-module $\rho$.
\end{definition}

\subsection{Framed motivic DT invariants}
\label{subsec:motivicDT_quiver_potential}

Let $r\geq 1$ be an integer, $Q$ a quiver, $\widetilde Q$ its $r$-framing with respect to a vertex $0 \in Q_0$ (Definition \ref{def:r-framing}). 
A representation $\widetilde \rho$ of $\widetilde Q$ can be uniquely written as a pair $(\rho,u)$, where $\rho$ is a representation of $Q$ and $u = (u_1,\dots,u_r)$ is an $r$-tuple of linear maps $u_i\colon \widetilde{\rho}_\infty\to \rho_0$. 

From now on, we assume all $r$-framed representations to satisfy $\dim_{\BC} \widetilde{\rho}_\infty = 1$, so that by Convention \ref{convention:dimvec} one has $\underline{\dim}\,\widetilde \rho=(\underline{\dim}\,\rho,1)$.

\begin{definition}[{\cite{NagNak} and \cite[Def.~3.1]{RefConifold}}]\label{def:framedstability}
Let $\zeta\in \mathbb R^{Q_0}$ be a stability parameter.
A representation $(\rho,u)$ of $\widetilde Q$ (or a $\widetilde J$-module) with $\dim_{\BC} \widetilde{\rho}_\infty = 1$ is said to be \emph{$\zeta$-(semi)stable} if it is $(\zeta,\zeta_\infty)$-(semi)stable in the sense of Definition \ref{stablereps}, where $\zeta_\infty = -\zeta\cdot \underline{\dim}\,\rho$.
\end{definition}

Now fix a potential $W$ on $Q$. We define motivic DT invariants for moduli stacks of $r$-framed $J$-modules on $Q$. Let $\widetilde J$ be the Jacobi algebra $J_{\widetilde{Q},W}$, where $W$ is viewed as a potential on $\widetilde Q$ in the obvious way. For a generic stability parameter $\zeta \in \BR^{Q_0}$, and a dimension vector $\alpha \in \BN^{Q_0}$, set
\[
\zeta_\infty = -\zeta\cdot \alpha,\quad \widetilde \zeta = (\zeta,\zeta_\infty),\quad \widetilde \alpha = (\alpha,1).
\]
As in \S\,\ref{subsec:quiver_with_potential}, consider the functions
\[
\begin{tikzcd}
\Rep^{\widetilde{\zeta}\textrm{-st}}(\widetilde{Q},\widetilde{\alpha})\arrow[hook]{r}\arrow[swap]{dr}{f_{\widetilde{\zeta},\widetilde{\alpha}}}
& \Rep(\widetilde{Q},\widetilde{\alpha}) \arrow{d}{f_{\widetilde{\alpha}}} \\
& \BA^1
\end{tikzcd}
\]
associated to the potential $W$. Define the moduli stacks
\[
\mathfrak M(\widetilde J,\alpha) = \bigl[\crit (f_{\widetilde{\alpha}}) \,\big/ \GL_\alpha\bigr],
\quad \mathfrak M_\zeta (\widetilde J,\alpha) = \bigl[\crit (f_{\widetilde{\zeta},\widetilde{\alpha}}) \,\big/ \GL_\alpha\bigr].
\]

\begin{definition}\label{def:motivic_partition_functions}
We define $r$-framed motivic Donaldson--Thomas invariants
\begin{align*}
    \left[\mathfrak M(\widetilde J,\alpha) \right]_{\vir}
    \,&=\,\frac{\bigl[\crit (f_{\widetilde\alpha})\bigr]_{\vir}}{\left[\GL_{\alpha}\right]_{\vir}} \\
    \left[\mathfrak M_\zeta(\widetilde J,\alpha) \right]_{\vir}
    \,&=\,\frac{\bigl[\crit (f_{\widetilde \zeta,\widetilde\alpha})\bigr]_{\vir}}{\left[\GL_{\alpha}\right]_{\vir}}
\end{align*}
in $\widetilde{\mathcal M}_{\BC}$, and the associated motivic generating functions
\begin{align*}
\widetilde A_U 
&= \sum_{\alpha \in \BN^{Q_0}} \,\left[\mathfrak M(\widetilde J,\alpha) \right]_{\vir}\cdot y^{\widetilde\alpha} \in \mathcal T_{\widetilde Q} \\
\widetilde A_\zeta 
&= \sum_{\alpha \in \BN^{Q_0}} \,\left[\mathfrak M_\zeta(\widetilde J,\alpha) \right]_{\vir}\cdot y^{\widetilde\alpha} \in \mathcal T_{\widetilde Q} \\
\mathsf{Z}_{\zeta} 
&= \sum_{\alpha \in \BN^{Q_0}} \,\left[\mathfrak M_\zeta(\widetilde J,\alpha) \right]_{\vir}\cdot y^{\alpha} \in \mathcal T_{Q}.
\end{align*}
\end{definition}

The fact that the $r$-framed invariants live in $\widetilde{\mathcal M}_{\BC}$ (i.e.~have no monodromy) follows from \cite[Lemma 1.10]{RefConifold}. The reason is that the dimension vector $\widetilde{\alpha} = (\alpha,1)$ contains `$1$' as a component.

Our main goal is to give a formula for $\mathsf{Z}_{\zeta}$, where $\zeta$ is chosen in a PT (resp.~DT) chamber.

\section{Noncommutative crepant resolutions}\label{sec:NCCR}
Fix integers $N_0>0$ and $0\leq N_1\leq N_0$, and set $N=N_0+N_1$.
The cone realising the singular Calabi--Yau $3$-fold $X = \Spec\, \BC[x,y,z,w]/(xy-z^{N_0}w^{N_1})$ as a toric variety is the cone over the quadrilateral $\Gamma_{N_0,N_1}$ with vertices $(0,0)$, $(N_0,0)$, $(N_1,1)$ and $(0,1)$, which becomes a triangle when $N_1=0$. 

A \emph{partition} $\sigma$ of $\Gamma_{N_0,N_1}$ is, roughly speaking, a subdivision of the polygon $\Gamma_{N_0,N_1}$ into $N$ triangles $\set{\sigma_i}_{0\leq i\leq N-1}$ of area $1/2$. We refer the reader to \cite[\S\,1.1]{Nagao_small_NCCR} for the precise definition. We denote by $\Gamma_\sigma$ the resulting object --- see Figure \ref{fig:partition} for an example with $N_0=4$, $N_1=2$. Each internal edge $\sigma_{i,i+1}$ corresponds to a component $C_{i}$ of the exceptional curve in the resolution $Y_\sigma$ attached to $\Gamma_\sigma$, and $C_{i}$ is a $(-1,-1)$-curve (resp.~a $(-2,0)$-curve) if $\sigma_{i}\cup \sigma_{i+1}$ is a quadrilateral (resp.~a triangle). 

\begin{figure}[ht]
    \centering
    \begin{tikzpicture}
\draw[black, thick] (0,0) -- (0,1);
\draw[black, thick] (0,0) -- (4,0);
\draw[black, thick] (0,1) -- (2,1);
\draw[black, thick] (2,1) -- (4,0);
\draw[red] (1,1) -- (1,0);
\draw[red] (1,1) -- (2,0);
\draw[red] (0,1) -- (1,0);
\draw[red] (2,1) -- (3,0);
\draw[red] (1,1) -- (3,0);
    \end{tikzpicture}
\caption{A partition $\Gamma_\sigma$ of $\Gamma_{4,2}$.}    
\label{fig:partition}
\end{figure}
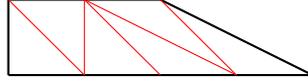

As explained in \cite{Nagao_small_NCCR, MorrNagao}, any partition $\sigma$ gives rise to a small crepant resolution $Y_\sigma \to X$ by taking the fan of $\Gamma_\sigma$, and any two such resolutions are related by a sequence of mutations. On the other hand, Nagao \cite{Nagao_small_NCCR} explains how to associate to $\sigma$ a bipartite tiling of the plane. The general construction in \cite{Hanany_Vegh} then produces a quiver with potential $(Q_\sigma,\omega_\sigma)$. Its Jacobi algebra $J_\sigma$ is derived equivalent to $Y_\sigma$ \cite[\S\,1]{Nagao_small_NCCR}. 

The quiver $Q_\sigma$ has vertex set $\widehat I = \set{0,1,\ldots,N-1}$, which we identify with the cyclic group $\BZ/N\BZ$. This in turn yields an identification 
\begin{equation}\label{identification}
\BZ^{\widehat I} = \BZ^{(Q_\sigma)_0}.
\end{equation}
Each vertex of $Q_\sigma$ has an edge in and out of the next vertex. The partition prescribes which vertices carry a loop, as we now explain using the specific example of Figure \ref{fig:partition}. In that case, the partition $\sigma=\set{\sigma_i}_{0\leq i\leq 5}$ can be identified with the ordered set of half-points 
\begin{equation}
\label{sigma}
\sigma = \Set{
\left(\frac{1}{2},0\right),
\left(\frac{1}{2},1\right),
\left(\frac{3}{2},0\right),
\left(\frac{5}{2},0\right),
\left(\frac{3}{2},1\right),
\left(\frac{7}{2},0\right)
},
\end{equation}
where the $i$th element corresponds to the mid-point of the base of the $i$th triangle $\sigma_i$.
A vertex $k \in \widehat I$ will carry a loop if and only if $\sigma_{k-1}$ and $\sigma_{k}$ have the same $y$-coordinate. Thus, by cyclicity, in our case we get two vertices ($k=0,3$) carrying a loop. The resulting quiver is drawn in Figure \ref{fig:quiver_of_partition}.

\begin{figure}[h]
\centering
\begin{tikzpicture}[>=stealth,->,shorten >=2pt,looseness=.5,auto]
  \matrix [matrix of math nodes,
           column sep={2cm,between origins},
           row sep={1.2cm,between origins},
           nodes={circle, draw, minimum size=1.5mm}]
{ 
& |(A)| 0 & |(B)| 1 & \\
|(F)| 5 & & & |(C)| 2 \\
& |(E)| 4 & |(D)| 3 & \\
};
\tikzstyle{every node}=[font=\small\itshape]
\draw (A) to [bend left=15,looseness=1] (B) node {};
\draw (B) to [bend left=15,looseness=1] (A) node {};

\draw (B) to [bend left=15,looseness=1] (C) node {};
\draw (C) to [bend left=15,looseness=1] (B) node {};

\draw (C) to [bend left=15,looseness=1] (D) node {};
\draw (D) to [bend left=15,looseness=1] (C) node {};

\draw (D) to [bend left=15,looseness=1] (E) node {};
\draw (E) to [bend left=15,looseness=1] (D) node {};

\draw (E) to [bend left=15,looseness=1] (F) node {};
\draw (F) to [bend left=15,looseness=1] (E) node {};

\draw (F) to [bend left=15,looseness=1] (A) node {};
\draw (A) to [bend left=15,looseness=1] (F) node {};

\path[->] (A) edge [loop above] node {} ();
\path[->] (D) edge [loop below] node {} ();
\end{tikzpicture}
\caption{The quiver $Q_\sigma$ associated to the partition \eqref{sigma}.}
\label{fig:quiver_of_partition}
\end{figure}
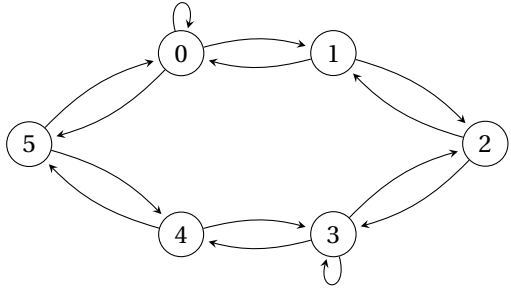

For the definition of the potential $\omega_\sigma$, we refer the reader to \cite[\S\,1.2]{Nagao_small_NCCR} or \cite[\S\,2.A]{MorrNagao}. It is proved in \cite[\S\,4]{MorrNagao} that $(Q_{\sigma},\omega_\sigma)$ has a cut for all $\sigma$.

\begin{remark}\label{Q_is_symmetric}
The quiver $Q_\sigma$ is \emph{symmetric}. This implies that its motivic quantum torus $\mathcal T_{Q_\sigma}$ is commutative.
\end{remark}

We fix $\epsilon_0,\ldots,\epsilon_{N-1}$ to be the basis of $\BZ^{(Q_\sigma)_0}$ corresponding to the canonical basis of $\BZ^{\widehat{I}}$ under \eqref{identification}. We call $\epsilon_i$ a \emph{simple root}, and $\delta = \epsilon_0 + \epsilon_1 + \cdots + \epsilon_{N-1}$ the positive minimal imaginary root. Following the notation in \cite{MorrNagao}, we set $\epsilon_{[a,b]} = \sum_{a\leq i\leq b}\epsilon_i$ for all $1\leq a\leq b\leq N-1$, and
\begin{equation}
    \label{eqn:Delta_Sets}
\begin{split}
    \Delta_{+}^{\mathrm{re},+} 
    &\,=\, \Set{\epsilon_{[a,b]} + n\cdot \delta | 1\leq a\leq b\leq N-1,\, n \in \BZ_{\geq 0}} \\
    \Delta_{+}^{\mathrm{re},-} 
    &\,=\, \Set{-\epsilon_{[a,b]} + n\cdot \delta | 1\leq a\leq b\leq N-1,\, n \in \BZ_{>0}} \\ 
    \Delta_{+}^{\mathrm{im}} 
    &\,=\,\Set{n \cdot \delta | n \in \BZ_{>0}}.
\end{split}
\end{equation}
From the above sets we form the larger sets
\[
\Delta_+^{\mathrm{re}} \,=\, \Delta_{+}^{\mathrm{re},+}  \amalg \Delta_{+}^{\mathrm{re},-}, \quad \Delta_+ \,=\,\Delta_+^{\mathrm{re}} \amalg \Delta_{+}^{\mathrm{im}}.
\]

\begin{remark}
The above sets depend on $\sigma$, but we omit this dependence to ease notation; in the language of \cite{MorrNagao}, we have $\Delta_+ = \Delta_{\sigma,+}$, $\Delta_+^{\mathrm{re}} = \Delta_{\sigma,+}^{\mathrm{re}}$ and $\Delta_+^{\mathrm{im}} = \Delta_{\sigma,+}^{\mathrm{im}}$.
\end{remark}

\section{Higher rank motivic DT theory of points}\label{sec:motive_quot}
The rank $1$ DT theory of points on a $3$-fold $Y$ is entirely solved, see e.g.~\cite{BFHilb} for the case of $\Hilb^nY$ and \cite{RGKummer} for the \emph{reduced} DT theory of points on an abelian $3$-fold. In higher rank, to define the theory we fix a locally free sheaf $F$ of rank $r$ on $Y$. Building on the case of $Y=\BA^3$, fully explored in \cite{Cazzaniga_Thesis, ThesisR, refinedDT_asymptotics,cazzaniga2020framed}, a virtual motive for the Quot scheme $\Quot_Y(F,n)$ was defined in \cite[Def.~4.10]{Quot19} via power structures, along the same lines of the rank $1$ case \cite[\S\,4.1]{BBS}.

The generating function
\[
\DT_r^{\points}(Y,(-1)^rs) = \sum_{n\geq 0}\,\bigl[\Quot_Y(F,n) \bigr]_{\vir}\cdot ((-1)^rs)^n
\]
was computed in \cite[Thm.~4.11]{Quot19} as a plethystic exponential. Just as in the case of the naive motives \cite{ricolfi2019motive}, the generating function does not depend on $F$ but only on $r$ and on the motive of $Y$.

Consider the singular affine toric Calabi--Yau $3$-fold $X =  \Spec \BC[x,y,z,w]/(xy-z^{N_0}w^{N_1}) \subset\BA^4$,
and fix a partition $\sigma$ associated to the polygon $\Gamma_{N_0,N_1}$.

\begin{lemma}\label{lemma:motive_of_Ysigma}
Let $Y_\sigma$ be the crepant resolution of $X$ corresponding to $\sigma$. Then
\[
[Y_\sigma] = \BL^3+(N-1)\BL^2\,\in\,K_0(\Var_{\BC}).
\]
\end{lemma}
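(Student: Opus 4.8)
The plan is to compute the class $[Y_\sigma]$ by exploiting the toric structure of the small crepant resolution and the cellular (Bialynicki-Birula–type) decomposition it induces. First I would recall that $Y_\sigma \to X$ is a toric morphism, so $Y_\sigma$ is a smooth toric $3$-fold whose fan is the fan over the triangulated polygon $\Gamma_\sigma$; since $\Gamma_\sigma$ is subdivided into $N$ triangles of area $1/2$, the fan has exactly $N$ maximal (three-dimensional) cones. Because $Y_\sigma$ is smooth and toric, its class in $K_0(\Var_{\BC})$ is determined combinatorially: $[Y_\sigma] = \sum_{k} \BL^{\,3-k}\,(\#\text{of }k\text{-dimensional cones in the fan})$, i.e.\ the motive is a sum of tori $(\BC^\ast)^{3-k}$ indexed by cones, reassembled (via the standard stratification of a toric variety by torus orbits, together with $[(\BC^\ast)^m] = (\BL-1)^m$) into a polynomial in $\BL$.

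The key combinatorial step is therefore to count cones of each dimension in the fan over $\Gamma_\sigma$. I would argue as follows: $\Gamma_\sigma$ sits at height $1$ in $\BR^3$, with $N$ triangles; the $3$-dimensional cones are the cones over these triangles, giving $N$ of them (these contribute $\BL^3$ in total after reassembly, matching the open dense torus having been counted once — more precisely, the alternating-sum bookkeeping will collapse as usual). The $1$-dimensional cones (rays) correspond to lattice points of $\Gamma_\sigma$ that are vertices of the triangulation; the $2$-dimensional cones correspond to edges of the triangulation. Using that each triangle has area $1/2$ (so is a basic triangle, no interior or edge-interior lattice points beyond its vertices), a short Euler-characteristic / edge-counting argument for the planar triangulated polygon $\Gamma_\sigma$ (with $N$ triangles) shows that the number of interior edges is $N-1$ — these are exactly the $\sigma_{i,i+1}$ producing the exceptional curves $C_i$ mentioned in \S\ref{sec:NCCR} — and the relevant counts of vertices/boundary edges are pinned down by $N_0,N_1$. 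Feeding these counts into the toric stratification and simplifying the polynomial in $\BL$ yields $[Y_\sigma] = \BL^3 + (N-1)\BL^2$.

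Alternatively — and this is probably the cleanest route to write up — I would use the structure of $Y_\sigma$ as iterated fibrations / the explicit description of small resolutions of $xy = z^{N_0}w^{N_1}$: the exceptional locus over the origin is a chain of $N-1$ rational curves $C_1,\dots,C_{N-1}$, and away from the singular point $Y_\sigma \simeq X \setminus \{0\}$ is an affine quadric threefold hypersurface, whose class is readily $\BL^3$ (e.g.\ the smooth affine quadric $xy - z^{N_0}w^{N_1} = 0$ minus the origin, stratified by the vanishing of $z,w$). Cutting $Y_\sigma$ into the preimage of the singular point and its complement gives $[Y_\sigma] = [X\setminus\{0\}] + [\pi^{-1}(0)] - (\text{correction for the fiber over }0\text{ already counted})$; carefully, $[Y_\sigma] = [X] - 1 + [\pi^{-1}(0)]$ where $\pi^{-1}(0)$ is the chain of $N-1$ lines. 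Since $[X] = \BL^3$ (the affine quadric hypersurface) and the chain of $N-1$ projective lines glued at $N-2$ nodes has class $(N-1)[\BP^1] - (N-2) = (N-1)(\BL+1) - (N-2) = (N-1)\BL + 1$, one gets $[Y_\sigma] = \BL^3 - 1 + (N-1)\BL + 1 = \BL^3 + (N-1)\BL$. That is off by a factor of $\BL$ from the claimed formula, which signals that the fiber over $0$ is thickened/the resolution contracts a ruled surface rather than a chain of curves over the whole singular locus — indeed the singular locus of $X$ is the line $\{x=y=z=0\}$ (when $N_1>0$) or $\{x=y=w=0\}\cup\{x=y=z=0\}$, not just the origin — so the exceptional locus is a union of $N-1$ $\BP^1$-bundles over $\BA^1$ (the $(-1,-1)$ and $(-2,0)$ ruled surfaces), each contributing $\BL^2 + \BL$ minus overlaps, and the net count is $(N-1)\BL^2$.

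The main obstacle I anticipate is exactly this bookkeeping of the exceptional locus: getting the gluing/overlap terms right when the $N-1$ exceptional divisors are $\BP^1$-bundles over the one-dimensional singular locus and they meet each other along sections. The safe fix is to do the computation torically from the start — count cones of the smooth fan over $\Gamma_\sigma$ (which has $N$ top cones, hence after the standard toric motive formula $[Y_\sigma]$ is forced to be $\BL^3 + c\,\BL^2$ for some $c$ equal to the number of interior $1$-cones/rays of the fan beyond the ``boundary'' ones), and then identify $c = N-1$ with the number of compact exceptional divisors, equivalently the number of interior edges of the triangulation $\Gamma_\sigma$. This makes the argument uniform in $\sigma$, $N_0$, $N_1$ and avoids case analysis on whether $C_i$ is a $(-1,-1)$- or $(-2,0)$-curve.
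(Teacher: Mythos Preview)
Your final recommendation --- compute torically --- is right and in the same spirit as the paper, but the paper's execution is much shorter than the orbit-cone stratification you propose. The paper simply observes that the $N$ maximal cones give an open cover of $Y_\sigma$ by $N$ copies of $\BA^3$, arranged in a \emph{chain}: $U_{\sigma_i}$ meets $U_{\sigma_{i+1}}$ in $U_{\sigma_{i,i+1}} \simeq \BA^2 \times \BC^\ast$, and there are no further overlaps to track. One line of cut-and-paste then gives $N\BL^3 - (N-1)\BL^2(\BL-1) = \BL^3 + (N-1)\BL^2$. Your orbit-stratification route would also succeed, but it requires counting vertices ($V = N+2$) and edges ($E = 2N+1$) of $\Gamma_\sigma$ and then checking that the $\BL^1$ and $\BL^0$ coefficients cancel; it is not automatic that the answer is ``forced to be $\BL^3 + c\BL^2$'' without doing this bookkeeping.

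Two concrete errors in the middle of the proposal. First, $[X] = \BL^3$ is false as soon as $N_1 > 0$: already for the conifold ($N_0 = N_1 = 1$) one computes $[X] = \BL^3 + \BL^2 - \BL$ by projecting $\{xy = zw\}$ to the $(z,w)$-plane. So the detour through $[X]$ cannot be repaired merely by thickening the exceptional locus; the base class itself is wrong. Second, in your closing paragraph you conflate rays (1-cones) with edges of the triangulation: rays correspond to \emph{vertices} of $\Gamma_\sigma$ (hence to toric divisors), while the $N-1$ interior \emph{edges} are 2-cones, corresponding to the exceptional curves $C_i$. The coefficient $c$ of $\BL^2$ in the orbit decomposition is $V-3$, which happens to equal $N-1$ only because $V = N+2$ for these particular triangulations.
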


\begin{proof}
The toric polygon of $Y_\sigma$ consists of $N=N_{0}+N_{1}$ triangles $\{\sigma_i\}$ intersecting pairwise along the edges $\{\sigma_{i, i+1}\}$.  The toric resolution $Y_\sigma$ is constructed by gluing the toric charts $U_{\sigma_i}$ along the open affine subvarieties $U_{\sigma_{i, i+1}}$. Thus, the class $[Y_\sigma]$ can be computed using the cut-and-paste relations, after noticing that $U_{\sigma_i}\simeq \BA^{3}$ and $U_{\sigma_{i,i+1}}\simeq \BA^{2}\times \BC^{\ast}$. The result is
\[
[Y_\sigma] = \sum_{i=1}^{N} \BL^{3} - \sum_{i=1}^{N-1} \BL^{2}(\BL-1)= \BL^{3}+ (N-1) \BL^{2}. \qedhere
\]
\end{proof}

By \cite[Thm.~A]{refinedDT_asymptotics} (but see also \cite{Cazzaniga_Thesis,ThesisR} for different proofs), after rephrasing the result using the conventions adopted in this paper (cf.~Remark \ref{remark:different_conventions}), one has
\[
\DT_r^{\points}(\BA^3,(-1)^rs) 
\,=\, \prod_{m\geq 1}\prod_{k=0}^{rm-1}\left(1-\BL^{k+2-\frac{rm}{2}}s^m\right)^{-1}
\,=\,\prod_{i=1}^r\DT_1^{\points}\left(\BA^3,-\BL^{\frac{-r-1}{2}+i}s\right).
\]
An easy power structure argument shows that the same decomposition into $r$ rank $1$ pieces holds for every smooth $3$-fold $Y$. In a little more detail (we refer the reader to \cite{GLMps} or to \cite{BBS,DavisonR} for the formal properties of the power structure on $\CM_{\BC}$), we have
\begin{align*}
    \DT_r^{\points}(Y,(-1)^rs)
    &\,=\, \DT_r^{\points}\left(\BA^3,(-1)^rs\right)^{\BL^{-3}[Y]} \\
    &\,=\, \prod_{i=1}^r\DT_1^{\points}\left(\BA^3,-\BL^{\frac{-r-1}{2}+i}s\right)^{\BL^{-3}[Y]} \\
    &\,=\,\prod_{i=1}^r\DT_1^{\points}\left(Y,-\BL^{\frac{-r-1}{2}+i}s\right).
\end{align*}
Therefore, for any smooth $3$-fold $Y$, we can write
\begin{equation}\label{eqn:DT_r_points_NCCR}
\DT_r^{\points}(Y,s) = \prod_{i=1}^r\DT_1^{\points}\left(Y,(-1)^{r+1}\BL^{\frac{-r-1}{2}+i}s\right).
\end{equation}
By Lemma \ref{lemma:motive_of_Ysigma}, the motivic partition of the Hilbert scheme of points on $Y_\sigma$ is 
\begin{equation}\label{eqn:Hilb_points_Ysigma}
\DT_1^{\points}(Y_\sigma,s)=
\prod_{m\geq 1}\prod_{k=0}^{m-1}\left(1-\BL^{k+1-\frac{m}{2}}(-s)^m\right)^{1-N}\left(1-\BL^{k+2-\frac{m}{2}}(-s)^m\right)^{-1}
\end{equation}
and this determines $\DT_r^{\points}(Y_\sigma,s)$ via Equation \eqref{eqn:DT_r_points_NCCR}. The result is
\begin{equation}\label{DT_points_Y_sigma}
\DT_r^{\points}(Y_\sigma,s) = \prod_{m\geq 1}\prod_{k=0}^{rm-1}\left(1-\BL^{k+1-\frac{rm}{2}}((-1)^rs)^m\right)^{1-N}\left(1-\BL^{k+2-\frac{rm}{2}}((-1)^rs)^m\right)^{-1}.
\end{equation}

\section{Motivic invariants of noncommutative crepant resolutions}

\subsection{Relations among motivic partition functions}

Fix integers $N_0>0$ and $0\leq N_1\leq N_0$, and set $N=N_0+N_1$.
We consider the affine singular toric Calabi--Yau $3$-fold
\[
X_{N_0,N_1} = \Spec \BC[x,y,z,w]/(xy-z^{N_0}w^{N_1}) \,\subset\,\BA^4.
\]
Fix a partition $\sigma$ of the polygon $\Gamma_{N_0,N_1}$, and set $(Q,W,J) = (Q_\sigma,\omega_\sigma,J_\sigma)$ to ease notation, where $J_\sigma$ is the Jacobi algebra of the quiver with potential $(Q_\sigma,\omega_\sigma)$ whose construction we sketched in \S\,\ref{sec:NCCR}.
The universal series
\[
A_U^{\sigma}(y) = A_U^{\sigma}(y_0,\ldots,y_{N-1}) = \sum_{\alpha \in \BN^{Q_0}} \,\bigl[\mathfrak{M}(J_\sigma,\alpha) \bigr]_{\vir} \cdot y^\alpha \,\in\,\mathcal T_{Q},
\]
defined in Equation \eqref{definition_AU},
is the main object of study in the work of Morrison and Nagao \cite{MorrNagao}.

Fix a generic stability parameter $\zeta$ (cf.~Definition \ref{def:generic}) on the unframed quiver $Q$. Consider the stacks $\mathfrak M^{\pm}_\zeta(J,\alpha)$ of $J$-modules all of whose Harder--Narasimhan factors have positive (resp.~negative) slope with respect to $\zeta$. These stacks are defined as follows. Restrict the function $f_\alpha \colon \Rep(Q,\alpha) \to \BA^1$, defined by taking the trace of $\omega_\sigma$, to the open subschemes $\Rep^{\pm}_{\zeta}(Q,\alpha) \subset \Rep(Q,\alpha)$ of representations satisfying the above properties. This yields two regular functions $f_\zeta^\pm \colon \Rep^{\pm}_{\zeta}(Q,\alpha) \to \BA^1$, and we set $\mathfrak M^{\pm}_\zeta(J,\alpha) = [\crit (f_\zeta^\pm) / \GL_\alpha]$. We define the virtual motives $[\mathfrak M^{\pm}_\zeta(J,\alpha)]_{\vir}$ as in the second identity in Equation \eqref{eqn:motivicDT}, and the associated motivic generating functions (depending on $\sigma$ via $J=J_\sigma$)
\[
A_\zeta^\pm = \sum_{\alpha \in \BN^{Q_0}}\,\left[\mathfrak M^{\pm}_\zeta(J,\alpha) \right]_{\vir}\cdot y^\alpha \in \mathcal T_Q.
\]
The vertices of $Q$ are labeled from $0$ up to $N-1$. Let $\widetilde Q$ be the $r$-framed quiver associated to $(Q,0)$ (Definition \ref{def:r-framing}). We let $\widetilde J = J_{\widetilde{Q},W}$ be the Jacobi algebra of $(\widetilde{Q},W) = (\widetilde Q_\sigma,\omega_\sigma)$.
Now recall the motivic generating functions
\[
\widetilde{A}_U,\quad \widetilde A_\zeta,\quad \mathsf{Z}_{\zeta}
\]
introduced in Definition \ref{def:motivic_partition_functions}.
We have to  extend the relations between framed and unframed generating functions (in the same spirit of Mozgovoy's work \cite{Mozgovoy_Framed_WC}) to general $r$. By the following lemma, the arguments are going to be essentially formal.

\begin{lemma}\label{lemma:switch_alpha_infinity}
In $\mathcal T_{\widetilde{Q}}$ there are identities
\[
y_\infty \cdot y^{(\alpha,0)} =  (-\BL^{\frac{1}{2}})^{-r\alpha_0}\cdot y^{\widetilde{\alpha}}, \quad y^{(\alpha,0)} \cdot y_\infty = (-\BL^{\frac{1}{2}})^{r\alpha_0}\cdot y^{\widetilde{\alpha}}.
\]
\end{lemma}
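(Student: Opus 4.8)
The plan is to unwind both identities directly from the product rule \eqref{eqn:product_in_TQ} in $\mathcal T_{\widetilde Q}$, reducing everything to a single computation of the skew-symmetric form $\braket{-,-}$ on $\BZ^{\widetilde Q_0}$. By \eqref{eqn:product_in_TQ} we have $y_\infty\cdot y^{(\alpha,0)} = (-\BL^{\frac{1}{2}})^{\braket{(\mathbf 0,1),(\alpha,0)}}y^{(\alpha,1)}$ and $y^{(\alpha,0)}\cdot y_\infty = (-\BL^{\frac{1}{2}})^{\braket{(\alpha,0),(\mathbf 0,1)}}y^{(\alpha,1)}$, and since $y^{(\alpha,1)} = y^{\widetilde\alpha}$ (recall $\widetilde\alpha = (\alpha,1)$), it suffices to prove $\braket{(\mathbf 0,1),(\alpha,0)} = -r\alpha_0$; the second identity then follows from skew-symmetry, $\braket{(\alpha,0),(\mathbf 0,1)} = -\braket{(\mathbf 0,1),(\alpha,0)} = r\alpha_0$.

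To evaluate $\braket{(\mathbf 0,1),(\alpha,0)} = \chi((\mathbf 0,1),(\alpha,0)) - \chi((\alpha,0),(\mathbf 0,1))$, I recall from Definition \ref{def:r-framing} that $\widetilde Q$ has vertex set $Q_0\sqcup\{\infty\}$ and arrow set $Q_1$ together with the $r$ new framing arrows $\infty\to 0$. In $\chi((\mathbf 0,1),(\alpha,0))$ the vertex sum $\sum_{i\in\widetilde Q_0}$ vanishes, because $(\mathbf 0,1)$ is supported at $\infty$ whereas $(\alpha,0)$ vanishes there; among the arrow terms $\sum_a\gamma_{t(a)}\delta_{h(a)}$, each old arrow $a\in Q_1$ contributes $0$ since $\gamma=(\mathbf 0,1)$ vanishes on $Q_0\ni t(a)$, while each of the $r$ framing arrows $\infty\to 0$ contributes $\gamma_\infty\delta_0 = \alpha_0$; hence $\chi((\mathbf 0,1),(\alpha,0)) = -r\alpha_0$. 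Symmetrically, in $\chi((\alpha,0),(\mathbf 0,1))$ the vertex sum again vanishes, the old arrows contribute $0$ because $(\mathbf 0,1)$ vanishes on $Q_0\ni h(a)$, and the framing arrows contribute $0$ because $(\alpha,0)$ vanishes at $\infty = t(a)$; thus $\chi((\alpha,0),(\mathbf 0,1)) = 0$. Subtracting yields $\braket{(\mathbf 0,1),(\alpha,0)} = -r\alpha_0$, as required.

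There is essentially no obstacle here: the statement is a bookkeeping identity about the Euler--Ringel form on the framed quiver, and the only point demanding care is keeping straight which of the two dimension vectors is supported where when evaluating the head and tail contributions of the $r$ new arrows, together with the fact that no arrow of $Q$ is incident to $\infty$. Everything else is forced by \eqref{eqn:product_in_TQ} and the skew-symmetry of $\braket{-,-}$.
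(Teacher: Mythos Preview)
Your proof is correct and follows essentially the same approach as the paper: both reduce the identities to the product rule \eqref{eqn:product_in_TQ} and compute $\chi((\mathbf 0,1),(\alpha,0))=-r\alpha_0$ and $\chi((\alpha,0),(\mathbf 0,1))=0$ directly from the structure of the framing arrows. Your version is simply more explicit about the term-by-term bookkeeping, whereas the paper compresses the computation into a single sentence.
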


\begin{proof}
Since $\infty \in \widetilde{Q}_0$ has edges only reaching $0$, and no vertex of $Q$ reaches $\infty$, we have $\chi((\alpha,0),(\mathbf 0,1)) = 0$, and $\chi((\mathbf 0,1),(\alpha,0))=-r\alpha_0$. The result follows by the product rule \eqref{eqn:product_in_TQ}.
\end{proof}

\begin{corollary}\label{dec1}
In $\mathcal T_{\widetilde{Q}}$, there are identities
\begin{align}
    \widetilde A_\zeta 
    &= y_\infty\cdot \mathsf{Z}_{\zeta}((-\BL^{\frac{1}{2}})^ry_0,y_1,\ldots,y_{N-1})\label{eqnn1}\\
    A_\zeta^-\cdot y_\infty 
    &= y_\infty\cdot A_\zeta^-(\BL^ry_0,y_1,\ldots,y_{N-1}).\label{eqnn2}
\end{align}
\end{corollary}

\begin{proof}
We have 
\begin{align*}
    y_\infty\cdot \mathsf{Z}_{\zeta}((-\BL^{\frac{1}{2}})^ry_0,y_1,\ldots,y_{N-1}) 
    &= \sum_{\alpha\in\BN^{Q_0}} \,\left[\mathfrak M_\zeta(\widetilde J,\alpha) \right]_{\vir} \cdot y_\infty\cdot ((-\BL^{\frac{1}{2}})^r y_0)^{\alpha_0}\cdot y_1^{\alpha_1}\cdots y_{N-1}^{\alpha_{N-1}} \\
    &= \sum_{\alpha\in\BN^{Q_0}} \,\left[\mathfrak M_\zeta(\widetilde J,\alpha) \right]_{\vir}  (-\BL^{\frac{1}{2}})^{r\alpha_0}\cdot (y_\infty\cdot  y^{\alpha}) \\
    &= \sum_{\alpha\in\BN^{Q_0}} \,\left[\mathfrak M_\zeta(\widetilde J,\alpha) \right]_{\vir}\cdot y^{\widetilde{\alpha}} \\
    &= \widetilde A_\zeta,
\end{align*}
where we have applied Lemma \ref{lemma:switch_alpha_infinity} in the last step. The identity \eqref{eqnn2} follows by an identical argument.
\end{proof}

\begin{lemma}[{\cite[Proposition 3.5]{RefConifold}}]\label{prop:filtration}
Let $Q$ be a quiver, $\zeta \in \BR^{Q_0}$ a generic stability parameter, $\widetilde{\rho}$ a representation (resp.~$\widetilde{J}$-module) of the $r$-framed quiver $\widetilde{Q}$ with $\dim_{\BC}\widetilde{\rho}_\infty = 1$. Then there is a unique filtration $0=\widetilde{\rho}^{0}\subset \widetilde{\rho}^{1}\subset \widetilde{\rho}^{2}\subset \widetilde{\rho}^{3}=\widetilde{\rho}$ such that the quotients $\widetilde{\pi}^{i}=\widetilde{\rho}^{i}/\widetilde{\rho}^{i-1}$ satisfy:
\begin{enumerate}
    \item $\widetilde{\pi}^{1}_{\infty}=0$, and $\widetilde{\pi}^{1}\in \Rep_{\zeta}^{+}(Q,\underline{\dim}\,\widetilde{\pi}^{1})$,
    \item $\dim_{\BC}\widetilde{\pi}^{2}_{\infty}=1$ and $\widetilde{\pi}^{2}$ is $\zeta$-stable,
    \item $\widetilde{\pi}^{3}_{\infty}=0$, and $\widetilde{\pi}^{3}\in \Rep_{\zeta}^{-}(Q,\underline{\dim}\,\widetilde{\pi}^{3})$.
\end{enumerate}
\end{lemma}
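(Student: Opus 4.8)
The plan is to reduce the statement to the torsion pair on $J$-modules (resp.\ $Q$-representations) cut out by the $\zeta$-slope, and then to read off how the one-dimensional framing vertex $\infty$ forces the three-step shape; the $r$ framing arrows play no essential role, exactly as in the $r=1$ case of \cite[Prop.~3.5]{RefConifold}. The key preliminary point is that genericity of $\zeta$ (Definition~\ref{def:generic}) rules out nonzero $\zeta$-semistable $J$-modules of slope $0$: any such module would have a $\zeta$-stable Jordan--H\"older factor $\rho'$ with $\zeta\cdot\underline{\dim}\rho'=0$. Hence a $J$-module all of whose Harder--Narasimhan factors have slope $\le 0$ in fact has \emph{strictly} negative HN slopes, so $\mathcal T^+:=\Rep^{+}_{\zeta}$ and $\mathcal F^-:=\Rep^{-}_{\zeta}$ form a torsion pair: $\mathcal T^+$ is closed under quotients, $\mathcal F^-$ under subobjects, $\Hom(\mathcal T^+,\mathcal F^-)=0$, and every object $M$ sits in a canonical sequence $0\to M^+\to M\to M^-\to 0$ with $M^+\in\mathcal T^+$, $M^-\in\mathcal F^-$. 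Since the $r$ arrows $\infty\to 0$ all emanate from the one-dimensional vertex, the intersection of two sub-$\widetilde Q$-representations of $\widetilde\rho$ with one-dimensional $\infty$-component again has one-dimensional $\infty$-component, so $\widetilde\rho$ has a unique minimal such sub-$\widetilde Q$-representation $\widetilde\tau$, namely the one generated by $\mathrm{im}\,u_1,\dots,\mathrm{im}\,u_r$.

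For existence, write $\widetilde\rho=(\rho,u)$ and set $\widetilde\rho^{1}:=\rho^{+}\subseteq\rho\subseteq\widetilde\rho$, the torsion part of the $Q$-part; it has zero $\infty$-component, and $\widetilde\pi^{1}=\rho^{+}\in\Rep^{+}_{\zeta}$, which is (1). Then $\widetilde\rho/\widetilde\rho^{1}=(\bar\rho,\bar u)$ with $\bar\rho=\rho/\rho^{+}\in\Rep^{-}_{\zeta}$ and $\dim_{\BC}(\widetilde\rho/\widetilde\rho^{1})_\infty=1$. Let $\widetilde\rho^{2}\subseteq\widetilde\rho$ be the preimage of the minimal sub $\widetilde\tau\subseteq\widetilde\rho/\widetilde\rho^{1}$ with $\dim_{\BC}\widetilde\tau_\infty=1$, and write $\tau'$ for the $Q$-part of $\widetilde\tau$. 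Then $\widetilde\pi^{2}=\widetilde\tau$ satisfies $\dim_{\BC}\widetilde\pi^{2}_\infty=1$, and $\widetilde\pi^{3}=(\widetilde\rho/\widetilde\rho^{1})/\widetilde\tau=\bar\rho/\tau'$ is a pure-$Q$ quotient of $\bar\rho\in\Rep^{-}_{\zeta}$, hence lies in $\Rep^{-}_{\zeta}$, which is (3).

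The content of the lemma is condition (2), that $\widetilde\pi^{2}=\widetilde\tau$ is $\zeta$-stable in the sense of Definition~\ref{def:framedstability}, and this is where I expect the only real obstacle. With the adapted parameter $\widetilde\zeta=(\zeta,-\zeta\cdot\underline{\dim}\tau')$ the class $\mathrm{Z}_{\widetilde\zeta}(\underline{\dim}\widetilde\tau)$ is purely imaginary, so $\varphi_{\widetilde\zeta}(\widetilde\tau)=1/2$. A proper nonzero subrepresentation $\widetilde\nu\subsetneq\widetilde\tau$ with $\dim_{\BC}\widetilde\nu_\infty=1$ would contain $\widetilde\tau$ by minimality, a contradiction; so $\widetilde\nu$ is a $Q$-subrepresentation of $\tau'\subseteq\bar\rho$, and since $\bar\rho\in\Rep^{-}_{\zeta}$ its maximal HN slope is negative, whence $\mu_\zeta(\underline{\dim}\widetilde\nu)<0$ and $\varphi_{\widetilde\zeta}(\widetilde\nu)<1/2$. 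The point that has to be got exactly right is precisely that $\bar\rho$ has \emph{strictly} negative HN slopes, so that no $Q$-subrepresentation sits at phase $1/2$: without genericity of $\zeta$ one would obtain only semistability of $\widetilde\pi^{2}$, and both the existence and the uniqueness of the filtration would fail. (The representation version of the lemma follows by the identical argument once one knows the corresponding genericity for $Q$-representations, which is automatic in all cases arising in this paper.)

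For uniqueness, let $0\subset\widetilde\sigma^{1}\subset\widetilde\sigma^{2}\subset\widetilde\sigma^{3}=\widetilde\rho$ be a second filtration satisfying (1)--(3). Since $\widetilde\sigma^{1}\in\Rep^{+}_{\zeta}$ has zero $\infty$-component it is contained in $\rho^{+}=\widetilde\rho^{1}$; conversely the image of $\widetilde\rho^{1}$ in $\widetilde\rho/\widetilde\sigma^{1}$ is a pure-$Q$ quotient of $\widetilde\rho^{1}$, hence still in $\Rep^{+}_{\zeta}$, its image in the subquotient $\widetilde\sigma^{3}/\widetilde\sigma^{2}\in\Rep^{-}_{\zeta}$ vanishes because $\Hom(\mathcal T^+,\mathcal F^-)=0$, so it lands inside $\widetilde\sigma^{2}/\widetilde\sigma^{1}$, where as a pure-$Q$ subrepresentation of the $\zeta$-stable object $\widetilde\sigma^{2}/\widetilde\sigma^{1}$ it has phase $<1/2$, and being in $\Rep^{+}_{\zeta}$ it must therefore be zero; hence $\widetilde\rho^{1}=\widetilde\sigma^{1}$. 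Working now in $\widetilde\rho/\widetilde\rho^{1}$, the subobject $\widetilde\sigma^{2}/\widetilde\rho^{1}$ has one-dimensional $\infty$-component, so it contains $\widetilde\tau$; the pure-$Q$ quotient $(\widetilde\sigma^{2}/\widetilde\rho^{1})/\widetilde\tau$ embeds into $(\widetilde\rho/\widetilde\rho^{1})/\widetilde\tau\in\Rep^{-}_{\zeta}$ and hence has negative total slope, whereas as a proper quotient of the $\zeta$-stable $\widetilde\sigma^{2}/\widetilde\rho^{1}$ of phase $1/2$ it would have phase $>1/2$, i.e.\ positive total slope. Therefore $\widetilde\sigma^{2}/\widetilde\rho^{1}=\widetilde\tau$, i.e.\ $\widetilde\sigma^{2}=\widetilde\rho^{2}$, which completes the proof.
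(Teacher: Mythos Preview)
The paper does not prove this lemma: it is stated with attribution to \cite[Proposition~3.5]{RefConifold} and used without further argument, so there is no in-paper proof to compare against. Your argument is correct and is the standard one behind the cited reference: genericity of $\zeta$ eliminates slope-zero $\zeta$-semistable $J$-modules, so $(\Rep^+_\zeta,\Rep^-_\zeta)$ is a torsion pair, $\widetilde\rho^1=\rho^+$ gives (1), the minimal subrepresentation of $\widetilde\rho/\widetilde\rho^1$ containing the framing line is $\zeta$-stable because every proper subobject is a $Q$-subobject of $\bar\rho\in\Rep^-_\zeta$ and hence has strictly negative slope, and your uniqueness argument via $\Hom(\mathcal T^+,\mathcal F^-)=0$ together with the phase inequalities for stable objects is sound. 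The caveat you flag for the pure $Q$-representation version---that one needs the analogous genericity for representations rather than for $J$-modules---is a real point, but it is immaterial for the use made of the lemma in this paper.
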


\begin{lemma}\label{dec2}
Let $\zeta \in \BR^{Q_0}$ be a generic stability parameter. In $\mathcal T_{\widetilde{Q}}$, there are  factorisations
\begin{align}
    \widetilde A_U &= A^+_\zeta\cdot \widetilde A_\zeta \cdot A^-_\zeta \label{eqn1} \\
    \widetilde A_U &= A_U^\sigma \cdot y_\infty. \label{eqn2}
\end{align}
\end{lemma}

\begin{proof}
Equation \eqref{eqn1} is a direct consequence of the existence of the filtration of Lemma \ref{prop:filtration}. Equation \eqref{eqn2} follows directly from the following observation: given a framed representation $\widetilde{\rho} = (\rho,u)$ with $\dim_{\BC}\widetilde{\rho}_\infty = 1$, one can view $\rho$ as a sub-module $\rho\subset\widetilde{\rho}$ of dimension $(\underline{\dim}\,\rho,0)$, and the quotient $\widetilde{\rho}/\rho$ is the unique simple module of dimension $(0,1)$, based at the framing vertex.
\end{proof}

Following \cite[\S\,0]{MorrNagao}, we define, for $\alpha \in \Delta_{+}$, the infinite products
\begin{equation}\label{eqn:A_alpha}
    A_\alpha(y) = 
    \begin{cases}
    \displaystyle\prod_{j\geq 0}\left(1-\BL^{-j-\frac{1}{2}}y^\alpha\right) 
    & \textrm{if }\alpha \in \Delta^{\mathrm{re}}_{+}\textrm{ and } \sum_{k \notin \widehat{I}_\ell}\alpha_k\textrm{ is odd}\\ \\
    \displaystyle\prod_{j\geq 0}\left(1-\BL^{-j}y^\alpha\right)^{-1} & \textrm{if }\alpha \in \Delta^{\mathrm{re}}_{+}\textrm{ and } \sum_{k \notin \widehat{I}_\ell}\alpha_k\textrm{ is even}\\ \\
    \displaystyle\prod_{j\geq 0}\left(1-\BL^{-j}y^\alpha\right)^{1-N}\left(1-\BL^{-j+1}y^\alpha\right)^{-1} & \textrm{if }\alpha \in \Delta^{\mathrm{im}}_{+}
    \end{cases}
\end{equation}
where $\widehat{I}_\ell \subset \widehat I = (Q_\sigma)_0$ denotes\footnote{The set $\widehat{I}_\ell$ is denoted $\widehat{I}_r$ in \cite{MorrNagao}. We changed the notation to avoid conflict with the number $r$ of framings.} the set of vertices carrying a loop, and $\alpha_k \in \BN$ is the component of $\alpha$ corresponding to a vertex $k$. 

\begin{lemma}[{\cite[Lemma 2.6]{RefConifold}}]
Let $\zeta \in \BR^{Q_0}$ be a generic stability parameter. In $\mathcal T_Q$, there are identities
\begin{equation}\label{eqn:Apm_factorisation}
A^{\pm}_\zeta(y) = \prod_{\substack{\alpha \in \Delta_{+} \\ \pm\zeta\cdot \alpha > 0}} A_\alpha(y).
\end{equation}
\end{lemma}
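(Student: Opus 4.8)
The plan is to combine the Morrison--Nagao product formula for the \emph{unframed} universal series with the motivic Harder--Narasimhan factorisation, exploiting that $\mathcal T_{Q_\sigma}$ is commutative (Remark \ref{Q_is_symmetric}). On the one hand, by \cite{MorrNagao} --- the deep computational input, resting on mutations/derived equivalences between the $Y_\sigma$ together with an explicit base case --- one has
\[
A_U^\sigma(y) \,=\, \prod_{\alpha \in \Delta_{\sigma,+}} A_\alpha(y) \,\in\, \mathcal T_Q,
\]
the order of the factors being immaterial by commutativity. On the other hand, the unframed analogue of \eqref{eqn1} (proved by the same Harder--Narasimhan stratification argument, cf.\ \cite[\S\,3]{RefConifold}) gives
\[
A_U^\sigma \,=\, \prod_{\mu} A_{\zeta,\mu},
\]
the product over all slopes $\mu$ taken in decreasing order, where $A_{\zeta,\mu}$ denotes the generating series of $\zeta$-semistable $J$-modules of slope $\mu$; here $A^{\pm}_\zeta = \prod_{\pm\mu>0} A_{\zeta,\mu}$, because a $J$-module all of whose Harder--Narasimhan factors have positive (resp.\ negative) slope is exactly an iterated extension of $\zeta$-semistables of positive (resp.\ negative) slope. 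Since $\zeta$ is generic, the hyperplanes $\{\zeta\cdot\alpha = 0\}_{\alpha\in\Delta_{\sigma,+}}$ are avoided, so there is no nonzero $\zeta$-semistable $J$-module of slope $0$ and $A_{\zeta,0} = 1$.

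Next I would observe that each factor $A_\alpha(y)$ is supported on the ray $\BN_{\geq 0}\cdot\alpha$ --- for a real root it is a power series in $y^\alpha$, and for $\alpha = n\delta$ a power series in $y^{n\delta}$ --- so, $\mu_\zeta$ being constant on $\BR_{>0}\cdot\alpha$ and the slope being additive on vectors of equal slope, $A_\alpha$ is supported in the single $\zeta$-slope $\mu_\zeta(\alpha)$. Regrouping the Morrison--Nagao product by the value of $\mu_\zeta(\alpha)$ --- legitimate by commutativity --- one obtains
\[
A_U^\sigma \,=\, \prod_{\mu}\Bigl(\,\prod_{\substack{\alpha\in\Delta_{\sigma,+}\\ \mu_\zeta(\alpha)=\mu}}A_\alpha\,\Bigr),
\]
the outer product in decreasing order of $\mu$, with each inner factor supported in slope $\mu$. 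I would then invoke the uniqueness of such slope-ordered factorisations in the completed quantum torus: if $P = \prod_\mu P_\mu = \prod_\mu Q_\mu$ with each $P_\mu, Q_\mu$ supported in slope $\mu$ and both products taken in decreasing order of $\mu$, then $P_\mu = Q_\mu$ for all $\mu$; this follows by peeling off, in each fixed degree $y^\beta$, the contribution of the top slope occurring and inducting (only finitely many ordered factorisations of $\beta$ exist, and the relevant slopes form a finite set). Comparing the two expressions for $A_U^\sigma$ yields $A_{\zeta,\mu} = \prod_{\mu_\zeta(\alpha)=\mu}A_\alpha$ for every $\mu$, and multiplying over $\mu>0$ (resp.\ $\mu<0$), using $\mu_\zeta(\alpha)>0 \iff \zeta\cdot\alpha>0$, gives $A^{\pm}_\zeta = \prod_{\pm\zeta\cdot\alpha>0}A_\alpha$.

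The main obstacle I expect is the honest set-up of the two factorisations of $A_U^\sigma$ --- the Morrison--Nagao product formula, and the motivic Harder--Narasimhan recursion in $\mathcal T_Q$, where one must verify that the vanishing-cycle/critical-locus classes are multiplicative along the Harder--Narasimhan strata; this is precisely where the existence of a cut for $(Q_\sigma,\omega_\sigma)$ (guaranteed by \cite[\S\,4]{MorrNagao}) enters. Everything afterwards --- the slope-homogeneity of the $A_\alpha$, the regrouping, and the uniqueness of slope-ordered factorisations --- is routine bookkeeping in the completed quantum torus, requiring only some care with the infinite products, whose slopes accumulate at $\mu_\zeta(\delta)$.
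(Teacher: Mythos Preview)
The paper does not give its own proof of this lemma; it is quoted directly from \cite[Lemma~2.6]{RefConifold}, so there is no in-paper argument to compare against. Your derivation is correct. The two inputs you invoke --- the Morrison--Nagao product formula $A_U^\sigma = \prod_{\alpha\in\Delta_{\sigma,+}} A_\alpha$ and the unframed Harder--Narasimhan factorisation $A_U^\sigma = \prod_\mu A_{\zeta,\mu}$ together with $A_\zeta^\pm = \prod_{\pm\mu>0} A_{\zeta,\mu}$ --- are both available (the latter is precisely the content underlying \cite[Lemma~2.6]{RefConifold}, and the cut hypothesis from \cite[\S\,4]{MorrNagao} is exactly what makes the motivic stratification argument go through). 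Your uniqueness step is also sound: each $A_\alpha$ is a power series in $y^\alpha$, hence slope-homogeneous of slope $\mu_\zeta(\alpha)$; since $\mathcal T_Q$ is commutative here (Remark~\ref{Q_is_symmetric}), you can dispense with the ordered-product formalism entirely and simply take logarithms, using that the fibres of $\mu_\zeta$ partition $\BN^{Q_0}\setminus\{0\}$.

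The only point worth flagging is one of logical packaging rather than correctness. In \cite{RefConifold,MorrNagao} the identification of each slope piece $A_{\zeta,\mu}$ with $\prod_{\mu_\zeta(\alpha)=\mu} A_\alpha$ is obtained \emph{directly}, by classifying the $\zeta$-stable $J_\sigma$-modules along each ray and computing their virtual motives; indeed, this classification is part of how \cite[Thm.~0.1]{MorrNagao} is established in the first place. Your route takes that theorem as a black box and back-solves for the slope pieces via uniqueness. This is logically fine --- there is no circularity, since \cite[Thm.~0.1]{MorrNagao} is proved by explicit dimensional reduction and mutation, not by assembling the $A_\zeta^\pm$ --- and for the purposes of the present paper it is arguably the cleaner way to state the dependence on \cite{MorrNagao}.
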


\begin{lemma}\label{lemma:uni=plus_times_minus}
Let $\zeta \in \BR^{Q_0}$ be a generic stability parameter. In $\mathcal T_{Q}$, there is an identity
\begin{equation}\label{eqn:plus_times_minus}
A_{U}^\sigma = A_\zeta^+ \cdot A_\zeta^-.
\end{equation}
\end{lemma}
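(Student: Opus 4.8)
The plan is to deduce \eqref{eqn:plus_times_minus} from the Harder--Narasimhan stratification of the stack $\mathfrak M(J,\alpha)$ with respect to the generic parameter $\zeta$, in exact parallel with the proof of \eqref{eqn1}: that factorisation rests on the three-step filtration of Lemma \ref{prop:filtration}, and here, no framing being present, one needs only its simpler two-step unframed counterpart.

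\textbf{Step 1: no slope-zero semistables.} First I would note that genericity of $\zeta$ (Definition \ref{def:generic}) rules out nonzero $\zeta$-semistable $J$-modules of slope $0$: the Jordan--Hölder factors of such a module, taken in the abelian category of $\zeta$-semistable modules of slope $0$, would be nonzero $\zeta$-stable $J$-modules $\rho$ with $\zeta\cdot\underline{\dim}\,\rho = 0$, contradicting Definition \ref{def:generic}. Consequently the Harder--Narasimhan filtration $0 = \rho_0\subset\rho_1\subset\cdots\subset\rho_s = \rho$ of an arbitrary $J$-module $\rho$ has all factors of nonzero slope; as the slopes decrease strictly, there is a unique index $t$ with $\mu_\zeta(\rho_i/\rho_{i-1})>0$ for $i\leq t$ and $\mu_\zeta(\rho_i/\rho_{i-1})<0$ for $i>t$. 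Setting $\rho^+ := \rho_t$ gives a canonical filtration $0\subset\rho^+\subset\rho$ with $\rho^+\in \Rep^{+}_{\zeta}(Q,\beta)$, $\beta = \underline{\dim}\,\rho^+$, and $\rho/\rho^+\in\Rep^{-}_{\zeta}(Q,\gamma)$, $\gamma = \alpha-\beta$, and this two-step filtration is the unique one with these properties by the standard functoriality of Harder--Narasimhan filtrations (the unframed, and simpler, analogue of Lemma \ref{prop:filtration}).

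\textbf{Step 2: motivic multiplicativity.} Next I would stratify $\mathfrak M(J,\alpha)$ by $\beta = \underline{\dim}\,\rho^+$. The stratum indexed by $\beta$ carries a morphism to $\mathfrak M^{+}_\zeta(J,\beta)\times\mathfrak M^{-}_\zeta(J,\gamma)$ remembering $(\rho^+,\rho/\rho^+)$, which realises it as an affine-space fibration over the target (the fibre being the off-diagonal block of maps, an affine space once one chooses representatives modulo the unipotent radical of the relevant parabolic), along which the restriction of $f_\alpha$ is, up to this affine fibration, the sum of the pullbacks of $f_\zeta^{+}$ and $f_\zeta^{-}$. Feeding this into the compatibility of the motivic vanishing cycle with affine fibrations and the motivic Thom--Sebastiani isomorphism --- exactly the package already used to establish \eqref{eqn1} and \eqref{eqn:Apm_factorisation} --- and then dividing by $[\GL_\alpha]_{\vir}$ while absorbing the $\Ext^1$-contribution into the twist $(-\BL^{\frac{1}{2}})^{\braket{\beta,\gamma}}$ of the product rule \eqref{eqn:product_in_TQ}, yields the coefficientwise identity
\[
[\mathfrak M(J,\alpha)]_{\vir} = \sum_{\beta+\gamma=\alpha}(-\BL^{\frac{1}{2}})^{\braket{\beta,\gamma}}\,[\mathfrak M^{+}_\zeta(J,\beta)]_{\vir}\,[\mathfrak M^{-}_\zeta(J,\gamma)]_{\vir},
\]
which is precisely \eqref{eqn:plus_times_minus}; there is no ordering ambiguity since $\mathcal T_Q$ is commutative (Remark \ref{Q_is_symmetric}).

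The main obstacle is Step 2: one must check that, restricted to a Harder--Narasimhan stratum, the potential $\omega_\sigma$ genuinely splits as a Thom--Sebastiani sum plus terms irrelevant for the vanishing cycle along the unipotent directions, so that the virtual motives multiply with the expected twist. This is not a formal consequence of Step 1, but it is exactly the computation carried out in \cite[\S\,3]{MorrNagao} (and re-used for \eqref{eqn1}), so I would invoke it rather than reproduce it. As an alternative, shorter route --- valid once one grants Morrison--Nagao's evaluation $A_U^\sigma = \prod_{\alpha\in\Delta_{\sigma,+}}A_\alpha(y)$ --- genericity forces $\zeta\cdot\alpha\neq 0$ for every $\alpha\in\Delta_{\sigma,+}$ (each positive real root is the dimension vector of a rigid $\zeta$-stable $J$-module, and the minimal imaginary root $\delta$ is the dimension vector of a simple, hence $\zeta$-stable, $J$-module, so $\zeta\cdot(n\delta)=n\,\zeta\cdot\delta\neq 0$ for all $n>0$), whence $\Delta_{\sigma,+} = \{\zeta\cdot\alpha>0\}\sqcup\{\zeta\cdot\alpha<0\}$; commutativity of $\mathcal T_Q$ together with \eqref{eqn:Apm_factorisation} then gives at once $A_U^\sigma = \bigl(\prod_{\zeta\cdot\alpha>0}A_\alpha\bigr)\bigl(\prod_{\zeta\cdot\alpha<0}A_\alpha\bigr) = A_\zeta^+\cdot A_\zeta^-$.
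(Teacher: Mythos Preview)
Your ``alternative, shorter route'' is precisely the paper's own proof: invoke Morrison--Nagao's factorisation $A_U^\sigma = \prod_{\alpha\in\Delta_{\sigma,+}}A_\alpha$, observe that genericity forces $\zeta\cdot\alpha\neq 0$ for every $\alpha\in\Delta_{\sigma,+}$, and split the product according to the sign of $\zeta\cdot\alpha$ using \eqref{eqn:Apm_factorisation}. (You even supply a justification the paper leaves implicit, namely why every positive root is the dimension vector of some $\zeta$-stable $J$-module.)

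Your primary approach via the two-step Harder--Narasimhan filtration is a genuinely different, more structural route. It avoids appealing to the explicit product formula for $A_U^\sigma$ and instead derives \eqref{eqn:plus_times_minus} directly from the wall-crossing machinery (stratification, affine fibration, motivic Thom--Sebastiani), exactly as one proves \eqref{eqn1} and \eqref{eqn:Apm_factorisation} in the first place. This buys independence from the hard computation of $A_U^\sigma$, at the cost of re-running the Thom--Sebastiani package; the paper, having already quoted both that package (for \eqref{eqn:Apm_factorisation}) and the product formula for $A_U^\sigma$, simply combines the two outputs. One small correction: the relevant reference for the Hall-algebra-style factorisation you invoke in Step~2 is \cite{RefConifold} (their \S\,2 and Lemma~2.6), not \cite[\S\,3]{MorrNagao}.
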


\begin{proof}
By \cite[Thm.~0.1]{MorrNagao} there is a factorisation
\[
A_U^\sigma(y) = \prod_{\alpha \in \Delta_{+}} A_\alpha(y).
\]
Since $\zeta$ is generic, $\zeta\cdot \alpha \neq 0$ for all $\alpha \in \Delta_{+}$. The result then follows by combining this factorisation with Equation \eqref{eqn:Apm_factorisation}.
\end{proof}

\begin{theorem}
Let $\zeta \in \BR^{Q_0}$ be a generic stability parameter. In $\mathcal T_Q$, there is an identity
\begin{equation}\label{eqn:important_fraction}
\mathsf{Z}_{\zeta}(y) = \frac{A_\zeta^-((-\BL^{\frac{1}{2}})^ry_0,y_1,\ldots,y_{N-1})}{A_\zeta^-((-\BL^{-\frac{1}{2}})^ry_0,y_1,\ldots,y_{N-1})}.
\end{equation}
\end{theorem}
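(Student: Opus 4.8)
The plan is to combine the four factorisations established above and cancel the common factors. First I would start from equation \eqref{eqn1}, $\widetilde A_U = A_\zeta^+ \cdot \widetilde A_\zeta \cdot A_\zeta^-$, and substitute equation \eqref{eqn2}, $\widetilde A_U = A_U^\sigma \cdot y_\infty$, on the left, together with equation \eqref{eqnn1}, $\widetilde A_\zeta = y_\infty\cdot \mathsf Z_\zeta((-\BL^{1/2})^r y_0,y_1,\ldots,y_{N-1})$, in the middle. This yields
\[
A_U^\sigma\cdot y_\infty = A_\zeta^+\cdot y_\infty\cdot \mathsf Z_\zeta((-\BL^{\tfrac12})^ry_0,y_1,\ldots,y_{N-1})\cdot A_\zeta^-.
\]
Next I would use Lemma \ref{lemma:uni=plus_times_minus} to replace $A_U^\sigma$ by $A_\zeta^+\cdot A_\zeta^-$ on the left. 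Since $\mathcal T_Q$ is commutative (Remark \ref{Q_is_symmetric}), $A_\zeta^+$ commutes with everything in $\mathcal T_Q$, and one can cancel the factor $A_\zeta^+$ (it is invertible, being a product of the invertible series $A_\alpha$), leaving
\[
A_\zeta^-\cdot y_\infty = y_\infty\cdot \mathsf Z_\zeta((-\BL^{\tfrac12})^ry_0,y_1,\ldots,y_{N-1})\cdot A_\zeta^-.
\]

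Now I would move $y_\infty$ past $A_\zeta^-$ on the left-hand side using equation \eqref{eqnn2}, $A_\zeta^-\cdot y_\infty = y_\infty\cdot A_\zeta^-(\BL^ry_0,y_1,\ldots,y_{N-1})$. Substituting this in and cancelling the leading $y_\infty$ from both sides (legitimate since left multiplication by $y_\infty$ is injective on $\mathcal T_{\widetilde Q}$ by the product rule \eqref{eqn:product_in_TQ}), we obtain the identity
\[
A_\zeta^-(\BL^ry_0,y_1,\ldots,y_{N-1}) = \mathsf Z_\zeta((-\BL^{\tfrac12})^ry_0,y_1,\ldots,y_{N-1})\cdot A_\zeta^-(\BL^ry_0,y_1,\ldots,y_{N-1})\cdot\frac{1}{?}
\]
— more carefully, after the cancellation one is left with
\[
y_\infty\cdot A_\zeta^-(\BL^ry_0,y_1,\ldots,y_{N-1}) = y_\infty\cdot \mathsf Z_\zeta((-\BL^{\tfrac12})^ry_0,y_1,\ldots,y_{N-1})\cdot A_\zeta^-(y_0,\ldots,y_{N-1}),
\]
so that cancelling $y_\infty$ gives $A_\zeta^-(\BL^ry_0,y_1,\ldots,y_{N-1}) = \mathsf Z_\zeta((-\BL^{\tfrac12})^ry_0,y_1,\ldots,y_{N-1})\cdot A_\zeta^-(y_0,\ldots,y_{N-1})$ inside the commutative ring $\mathcal T_Q$.

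Finally I would solve for $\mathsf Z_\zeta$: since $A_\zeta^-$ is invertible in $\mathcal T_Q$,
\[
\mathsf Z_\zeta((-\BL^{\tfrac12})^ry_0,y_1,\ldots,y_{N-1}) = \frac{A_\zeta^-(\BL^ry_0,y_1,\ldots,y_{N-1})}{A_\zeta^-(y_0,\ldots,y_{N-1})},
\]
and then perform the change of variables $y_0 \mapsto (-\BL^{-1/2})^r y_0$, which sends $(-\BL^{1/2})^r y_0 \mapsto y_0$, $\BL^r y_0 \mapsto (-\BL^{1/2})^r\cdot(-\BL^{-1/2})^r\cdot\cdots$; concretely $\BL^r\cdot(-\BL^{-1/2})^r = (-1)^r\BL^{r/2} = (-\BL^{1/2})^r\cdot(-1)^{2r}\cdots$ — tracking signs, $\BL^r(-\BL^{-1/2})^r = (-\BL^{1/2})^r$ up to the identity $(-\BL^{-1/2})^r\BL^r = (-\BL^{1/2})^r$, and $1\cdot(-\BL^{-1/2})^r = (-\BL^{-1/2})^r$, yielding exactly
\[
\mathsf Z_\zeta(y) = \frac{A_\zeta^-((-\BL^{\tfrac12})^ry_0,y_1,\ldots,y_{N-1})}{A_\zeta^-((-\BL^{-\tfrac12})^ry_0,y_1,\ldots,y_{N-1})},
\]
which is \eqref{eqn:important_fraction}. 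The only genuinely delicate point is bookkeeping: one must check that left multiplication by $y_\infty$ is cancellable in $\mathcal T_{\widetilde Q}$ (immediate from \eqref{eqn:product_in_TQ}, as $y_\infty\cdot(-)$ merely multiplies each homogeneous component by an invertible scalar and shifts degree), that the series $A_\zeta^\pm$ are units (clear from \eqref{eqn:Apm_factorisation} and \eqref{eqn:A_alpha}, each $A_\alpha$ having constant term $1$), and that the substitution in the last step matches signs and powers of $\BL$ correctly; none of these is hard, but the sign tracking in the variable change is where an error would most likely slip in.
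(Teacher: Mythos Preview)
Your argument is correct and follows essentially the same route as the paper: combine \eqref{eqn1}, \eqref{eqn2}, \eqref{eqnn1}, \eqref{eqnn2}, and Lemma \ref{lemma:uni=plus_times_minus}, cancel $A_\zeta^+$ by invertibility and $y_\infty$ by injectivity of left multiplication, then perform the substitution $y_0\mapsto(-\BL^{-1/2})^ry_0$. The only remark is presentational: the false start in the middle (``$\cdot\frac{1}{?}$ --- more carefully\ldots'') and the verbose sign-tracking at the end should be cleaned up, and note that cancelling $A_\zeta^+$ relies on its invertibility in $\mathcal T_{\widetilde Q}$ (which follows since its inverse already lies in $\mathcal T_Q\subset\mathcal T_{\widetilde Q}$) rather than on commutativity, since the equation at that stage lives in the non-commutative $\mathcal T_{\widetilde Q}$.
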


\begin{proof}
Since $Q = Q_\sigma$ is symmetric (Remark \ref{Q_is_symmetric}), the algebra $\mathcal T_{Q}$ is commutative, therefore a power series $F \in \mathcal T_{Q}$ starting with the invertible element $1 \in \widetilde{\mathcal M}_{\BC}$ will be invertible. For instance $A_\zeta^+$ and $A_\zeta^-$ are invertible. Therefore we can write
\begin{align*}
    y_\infty\cdot \mathsf{Z}_{\zeta}((-\BL^{\frac{1}{2}})^ry_0,y_1,\ldots,y_{N-1}) &= \widetilde{A}_\zeta & \textrm{by }\eqref{eqnn1} \\
    &= (A_\zeta^+)^{-1}\cdot \widetilde{A}_U\cdot (A_\zeta^-)^{-1} & \textrm{by }\eqref{eqn1} \\
    &= (A_\zeta^+)^{-1}\cdot (A_U^\sigma\cdot y_\infty)\cdot (A_\zeta^-)^{-1} & \textrm{by }\eqref{eqn2} \\
    &= (A_\zeta^+)^{-1}\cdot (A_\zeta^+ \cdot A_\zeta^-\cdot y_\infty) \cdot (A_\zeta^-)^{-1} & \textrm{by }\eqref{eqn:plus_times_minus} \\
    &= y_\infty \cdot A_\zeta^-(\BL^ry_0,y_1,\ldots,y_{N-1})\cdot (A_\zeta^-)^{-1} & \textrm{by }\eqref{eqnn2}
\end{align*}
from which it follows that
\[
\mathsf{Z}_{\zeta}((-\BL^{\frac{1}{2}})^ry_0,y_1,\ldots,y_{N-1}) = \frac{A_\zeta^-(\BL^ry_0,y_1,\ldots,y_{N-1})}{A_\zeta^-(y_0,y_1,\ldots,y_{N-1})}.
\]
Thus the change of variable $y_0 \to (-\BL^{-\frac{1}{2}})^ry_0$ yields the result.
\end{proof}

\subsection{Computing invariants in the DT and PT chambers}\label{subsec:Computing_invariants}

In this subsection we prove Theorem \ref{mainthm}.

Define, for $\alpha \in \Delta_{+}$, the fraction
\begin{equation}\label{eqn:definition_Zeta^r}
Z_\alpha^{(r)}(y_0,y_1,\ldots,y_{N-1}) = \frac{A_\alpha((-\BL^{\frac{1}{2}})^ry_0,y_1,\ldots,y_{N-1})}{A_\alpha((-\BL^{-\frac{1}{2}})^ry_0,y_1,\ldots,y_{N-1})},
\end{equation}
where $A_\alpha$ is defined case by case in \eqref{eqn:A_alpha}. Then one deduces the following explicit formulae:
\[
    Z_\alpha^{(r)}((-1)^ry_0,y_1,\ldots,y_{N-1}) = 
    \begin{cases}
    \displaystyle\prod_{k=0}^{r\alpha_0-1}\left(1-\BL^{k+\frac{1}{2}-\frac{r\alpha_0}{2}}y^\alpha\right)
     & \textrm{if }\alpha \in \Delta^{\mathrm{re}}_{+}\textrm{ and } \sum_{k \notin \widehat{I}_\ell}\alpha_k\textrm{ is odd}\\ \\
     \displaystyle\prod_{k=0}^{r\alpha_0-1}\left(1-\BL^{k+1-\frac{r\alpha_0}{2}}y^\alpha\right)^{-1}
     & \textrm{if }\alpha \in \Delta^{\mathrm{re}}_{+}\textrm{ and } \sum_{k \notin \widehat{I}_\ell}\alpha_k\textrm{ is even}\\ \\
     \displaystyle\prod_{k=0}^{r\alpha_0-1}\left(1-\BL^{k+1-\frac{r\alpha_0}{2}}y^\alpha\right)^{1-N}\left(1-\mathbb L^{k+2-\frac{rm}{2}}y^\alpha \right)^{-1}
     & \textrm{if }\alpha \in \Delta^{\mathrm{im}}_{+}.
    \end{cases}
\]
These identities can be easily rewritten uniformly in terms of the `rank $1$' generating functions:
\begin{equation}\label{eqn:rank_r_as_a_product}
    Z_\alpha^{(r)}((-1)^ry_0,y_1,\ldots,y_{N-1}) = \prod_{i=1}^r Z_\alpha^{(1)}\left(-\BL^{\frac{-r-1}{2}+i}y_0,y_1,\ldots,y_{N-1}\right).
\end{equation}

Let us set
\[
s = y_0y_1\cdots y_{N-1},\quad T_i = y_i^{-1}, \quad T = (T_1,\ldots,T_{N-1}).
\]
For $1\leq a\leq b\leq N-1$, we let $T_{[a,b]} = T_a\cdots T_b$ be the monomial corresponding to the homology class $C_{[a,b]} = [C_a] + \cdots + [C_b] \in H_2(Y_\sigma,\BZ)$, where $C_i\subset Y_\sigma$ is a component of the exceptional curve. Let $c(a,b)$ be the number of $(-1,-1)$-curves in $\set{C_i|a\leq i\leq b}$. Then we set
\[
Z_{[a,b]}(s,T_{[a,b]}) = 
\begin{cases}
\displaystyle\prod_{m\geq 1}\prod_{j=0}^{m-1}\left(1-\BL^{j+\frac{1}{2}-\frac{m}{2}}(-s)^mT_{[a,b]} \right) & \textrm{if }c(a,b)\textrm{ is odd} \\ \\
\displaystyle\prod_{m\geq 1}\prod_{j=0}^{m-1}\left(1-\BL^{j+1-\frac{m}{2}}(-s)^mT_{[a,b]} \right)^{-1} & \textrm{if }c(a,b)\textrm{ is even}
\end{cases}
\]
and
\[
Z_{\mathrm{im}}(s) = \prod_{m\geq 1}\prod_{j=0}^{m-1} \left(1-\BL^{j+1-\frac{m}{2}}(-s)^m\right)^{1-N}\left(1-\BL^{j+2-\frac{m}{2}}(-s)^m\right)^{-1}.
\]
Fix, as in \cite[\S\,6.C]{MorrNagao}, stability parameters
\[
\zeta_{\PT} = (1-N+\varepsilon,1,\ldots,1),\quad \zeta_{\DT} = (1-N-\varepsilon,1,\ldots,1),
\]
with $0 < \varepsilon \ll 1$ chosen so that they are generic. We want to compute
\[
\PT_r(Y_\sigma;s,T) = \mathsf Z_{\zeta_{\PT}}(s,T_1,\ldots,T_{N-1}),\quad \DT_r(Y_\sigma;s,T) = \mathsf Z_{\zeta_{\DT}}(s,T_1,\ldots,T_{N-1}).
\]
For $r=1$, these are the generating functions computed in \cite[Cor.~0.3]{MorrNagao}.
We know by Equation \eqref{eqn:Hilb_points_Ysigma} (see also \cite[Cor.~0.3\,(2)]{MorrNagao}) that 
\begin{equation}\label{eqn:Z_im=DT_1}
Z_{\mathrm{im}}(s) = \DT^{\points}_1(Y_\sigma,s),
\end{equation}
and Morrison--Nagao proved that 
\begin{equation}
\label{eqn:rank_one_PT}
\begin{split}
\PT_1(Y_\sigma;s,T) &= \prod_{1\leq a\leq b\leq N-1}Z_{[a,b]}(s,T_{[a,b]}) \\
\DT_1(Y_\sigma;s,T) &= Z_{\mathrm{im}}(s)\cdot \PT_1(Y_\sigma;s,T).
\end{split}
\end{equation}

We have
\begin{equation}\label{eqn:DT/PT_range}
    \begin{split}
        \Set{\alpha \in \Delta_{+}|\zeta_{\PT}\cdot \alpha < 0} &= \Delta_{+}^{\mathrm{re},-} \\
        \Set{\alpha \in \Delta_{+}|\zeta_{\DT}\cdot \alpha < 0} &= \Delta_{+}^{\mathrm{re},-} \amalg \Delta_{+}^{\mathrm{im}},
    \end{split}
\end{equation}
where the definition of the sets in the right hand sides was given in Equation \eqref{eqn:Delta_Sets}. For the PT stability condition, we thus obtain
\begin{align*}
    \PT_r(Y_\sigma;s,T) 
    &= \frac{A^-_{\zeta_{\PT}}((-\BL^{\frac{1}{2}})^rs,T_1,\ldots,T_{N-1} )}{A^-_{\zeta_{\PT}}((-\BL^{-\frac{1}{2}})^rs,T_1,\ldots,T_{N-1})} & \textrm{by }\eqref{eqn:important_fraction} \\
    &=\prod_{\alpha \in \Delta_{+}^{\mathrm{re},-}} \frac{A_\alpha((-\BL^{\frac{1}{2}})^rs,T_1,\ldots,T_{N-1} )}{A_\alpha((-\BL^{-\frac{1}{2}})^rs,T_1,\ldots,T_{N-1} )} & \textrm{by }\eqref{eqn:Apm_factorisation}\textrm{ and }\eqref{eqn:DT/PT_range} \\
     &= \prod_{\alpha \in \Delta_{+}^{\mathrm{re},-}} Z_\alpha^{(r)}\left(s,T_1,\ldots,T_{N-1}\right) & \textrm{by }\eqref{eqn:definition_Zeta^r} \\
     &=\prod_{i=1}^r \prod_{\alpha \in \Delta_{+}^{\mathrm{re},-}} Z_\alpha^{(1)}\left((-1)^{r+1}\BL^{\frac{-r-1}{2}+i}s,T_1,\ldots,T_{N-1}\right) & \textrm{by }\eqref{eqn:rank_r_as_a_product} \\
     &=\prod_{i=1}^r\prod_{1\leq a\leq b\leq N-1} Z_{[a,b]}\left((-1)^{r+1}\BL^{\frac{-r-1}{2}+i}s,T_{[a,b]}\right) & \textrm{by }\eqref{eqn:Delta_Sets}\\
     &=\prod_{i=1}^r\PT_1\left(Y_\sigma;(-1)^{r+1}\BL^{\frac{-r-1}{2}+i}s,T\right), & \textrm{by }\eqref{eqn:rank_one_PT}
\end{align*}
which proves the first identity in Theorem \ref{mainthm}.

Similarly,
\begin{align*}
\prod_{\alpha \in \Delta_+^{\mathrm{im}}} \frac{A_\alpha((-\BL^{\frac{1}{2}})^rs,T_1,\ldots,T_{N-1} )}{A_\alpha((-\BL^{-\frac{1}{2}})^rs,T_1,\ldots,T_{N-1})} 
&= \prod_{\alpha \in \Delta_+^{\mathrm{im}}} Z_\alpha^{(r)}(s,T_1,\ldots,T_{N-1}) & \textrm{by }\eqref{eqn:definition_Zeta^r} \\
&= \prod_{i=1}^r Z_{\mathrm{im}}((-1)^{r+1}\BL^{\frac{-r-1}{2}+i}s) & \textrm{by }\eqref{eqn:rank_r_as_a_product} \\
&= \prod_{i=1}^r \DT_1^{\points}\left(Y_\sigma,(-1)^{r+1}\BL^{\frac{-r-1}{2}+i}s\right) & \textrm{by }\eqref{eqn:Z_im=DT_1} \\
&= \DT_r^{\points}(Y_\sigma,s). & \textrm{by }\eqref{eqn:DT_r_points_NCCR}
\end{align*}
In particular, thanks to \eqref{eqn:DT/PT_range}, the motivic DT/PT correspondence
\[
\DT_r(Y_\sigma;s,T)=\DT_r^{\points}(Y_\sigma,s)\cdot\PT_r(Y_\sigma;s,T)
\]
holds. Note that, thanks to Equation \eqref{DT_points_Y_sigma}, the right hand side is entirely explicit. Finally, the relation
\[
\DT_r(Y_\sigma;s,T) = \prod_{i=1}^r\DT_1\left(Y_\sigma;(-1)^{r+1}s\BL^{\frac{-r-1}{2}+i},T\right)
\]
follows from the factorisations of $\PT_r$ and $\DT_r^{\points}$ as products of (equally shifted) $r=1$ pieces, combined with the rank $1$ DT/PT correspondence \eqref{eqn:rank_one_PT}. The proof of Theorem \ref{mainthm} is complete.

\begin{remark}
A motivic DT/PT correspondence was obtained in \cite{DavisonR} in the rank $1$ case for the motivic contribution of a smooth curve in a $3$-fold, refining the corresponding enumerative calculations \cite{LocalDT,Ricolfi2018}.
\end{remark}

\begin{remark}\label{remark:geometric_interpretation}
In the case when $Y_\sigma$ is the crepant resolution of the conifold singularity, corresponding to $N_0=N_1=1$, the moduli space of framed quiver representation has a clear geometric interpretation for a choice of PT stability condition. Consider the moduli space $\mathcal{P}^{r}_{\alpha}(Y_\sigma)$ parametrising Shesmani's highly frozen stable triples \cite{sheshmani}, whose geometric points consist of framed multi-sections $\OO_{Y_\sigma}^{\oplus r}\rightarrow F$ with $0$-dimensional cokernel, where $F$ is a pure $1$-dimensional sheaf $F$ satisfying $\mathrm{ch}_{2}(F) = (\alpha_0-\alpha_1)[\mathbb P^{1}]$ and $\chi(F) = \alpha_0$. In \cite[Chap. 3]{Cazzaniga_Thesis} a scheme theoretic isomorphism $\mathfrak{M}_{\zeta_{\PT}}(\widetilde{J}_\sigma,\alpha)\simeq \mathcal{P}^{r}_{\alpha}(Y_{\sigma})$ is constructed, and it is used to compute a first instance of Formula \eqref{eqn:main_formulae}.
A completely analogous result holds when $Y_\sigma$ is the resolution of a line of $A_2$ singularities, corresponding to the case $N_0=2, N_1=0$ \cite[Appendix 3.A]{Cazzaniga_Thesis}. We leave to future work a full geometric interpretation of the more general moduli spaces of framed quiver representations that we studied in this paper.   
\end{remark}

\subsection*{Acknowledgments}
We thank Bal\'{a}zs Szendr\H{o}i for reading a first draft of this paper and for providing us with helpful comments and suggestions. We thank the anonymous referee for suggesting various improvements and corrections. A.C.~thanks CNR-IOM for support and the excellent working conditions. A.R.~thanks Dipartimenti di Eccellenza for support and SISSA for the excellent working conditions.

\bibliographystyle{amsplain-nodash}
\bibliography{bib}

\bigskip

\medskip
\noindent
\emph{Andrea T.~Ricolfi}, \texttt{aricolfi@sissa.it} \\
\textsc{Scuola Internazionale Superiore di Studi Avanzati (SISSA), Via Bonomea 265, 34136 Trieste, Italy}

\medskip
\noindent
\emph{Alberto Cazzaniga}, \texttt{albe.cazzaniga@gmail.com} \\
\textsc{Area Science Park - Istituto Ricerca e Tecnologie, Padriciano 99, 34149 Trieste, Italy}

\end{document}